\newtheorem{theorem}{Theorem}%[section]
\newtheorem{proposition}[theorem]{Proposition}
\newtheorem{remark}[theorem]{Remark}
\newcommand{\eqdef}{\overset{\mbox{\tiny{def}}}{=}}
\def\vh {\hat{\pel}}
\def\ls {\lesssim}
\def\vp{\varphi}
\def\rd {\partial}
\def\f {\frac}
\def \R {\mathbb{R}}
\newcommand{\pel}{p}
\def\vh {\hat{\pel}}
\def\ls {\lesssim}
\def\rd {\partial}
\def\f {\frac}
\newcommand{\ba}{\begin{equation}}
	\newcommand{\ea}{\end{equation}}
\newcommand{\bea}{\begin{eqnarray}}
	\newcommand{\eea}{\end{eqnarray}}
\def\beaa{\begin{eqnarray*}}
	\def\eeaa{\end{eqnarray*}}
\title{Bounds on the Aspect Ratio of the Momentum Support of a 2D Collisionless Plasma}
\date{}
\author{ {Matthew Hernandez}$\,^{1}\;,\quad$
{Neel Patel}$\,^{1}\quad$ and $\quad$
{Elena Salguero}$\,^{2}$ \vspace{.5cm} \\
\footnotesize{$\,^{1}\;$ \textsc{University of Maine}} \\
{\footnotesize 5752 Neville Hall, Room 237
Orono, ME 04469} \vspace{.3cm} \\
\footnotesize{$\,^{2}\;$ \textsc{Max Planck Institute for Mathematics in the Sciences}} \\
{\footnotesize Inselstrasse 22, 04103 Leipzig,
Germany} \vspace{.3cm} \\
\footnotesize{\hspace{-0.7cm}
Email addresses: $\,\;$\ttfamily{matthew.b.hernandez@maine.edu}},
\footnotesize{$\,$ $\;$\ttfamily{neel.patel@maine.edu}},
\footnotesize{$\,$ $\;$\ttfamily{elena.salguero@mis.mpg.de}}
\vspace{.2cm}
}
\begin{document}

\maketitle

\begin{abstract}
The relativistic Vlasov-Maxwell system is a kinetic model for collisionless plasmas. For the two-dimensional model, global well-posedness of this model is known and was proven by deriving global bounds on the momentum support of the particle density function. In this paper, we prove bounds on the magnitude of the momentum support in one direction depending on the magnitude of the support in the corresponding orthogonal direction.
\end{abstract}

\section{Introduction}

Plasmas are completely ionized gases. When a plasma is assumed to be collisionless, the dominant forces are the electric and magnetic forces created by the charged particles and their currents. The relativistic Vlasov-Maxwell (rVM) system models the particle density and electromagnetic fields of collisionless plasmas with fast moving particles. In this paper, the traditional problem of a mono-charged plasma is considered in two dimensions. The plasma is described by the non-negative particle density function $$f:\mathbb{R}_{t}\times\mathbb{R}^{2}_{x}\times\mathbb{R}_{p}^{2}\rightarrow [0,\infty).$$
Here, $x\in\mathbb{R}^{2}$ is the spatial variable and $p \in\mathbb{R}^{2}$ is the relativistic momentum or velocity variable. We define the normalized momentum $\hat{p} = p/p_{0}$, where $p_{0} = \sqrt{1+|p|^{2}}$.  The charge and current densities of the plasma are given by 
 \begin{align*}
     \rho(t,x) \eqdef 4 \pi \int_{\R^2} f(t,x,p) dp, \quad   j(t,x) \eqdef 4 \pi \int_{\R^2} \hat{p} f(t,x,p) dp,
 \end{align*}
 respectively. We denote the electric and magnetic fields by $$E = (E_1, E_2):\mathbb{R}_{t}\times\mathbb{R}^{2}_{x} \rightarrow \mathbb{R}^{2}$$ and $$B :\mathbb{R}_{t}\times\mathbb{R}^{2}_{x} \rightarrow \mathbb{R},$$ respectively. Note that by the symmetry of the two-dimensional system, the magnetic field given by the scalar function above represents a vector field $(0,0,B)$ that is orthogonal to the two-dimensional plane.

In two dimensions, the rVM system is given by:
	\bea
	& &\rd_t f+\vh\cdot\nabla_x f+  (E_1 + \hat{p}_2 B, E_2 -\hat{p}_1 B)\cdot \nabla_\pel f = 0,\label{vlasov}\\
	& &\rd_t E= \nabla^{\perp}_{x} B - j,\quad \rd_t B= \nabla^{\perp}_{x} \cdot E,\label{maxwell}\\
	& &\nabla_x\cdot E=\rho ,\label{constraints}
	\eea
where $\nabla_{x} =  (\partial_{x_{1}}, \partial_{x_{2}})$, $\nabla^{\perp}_{x} = (\partial_{x_{2}}, -\partial_{x_{1}})$ and $\nabla_p =  (\partial_{p_{1}}, \partial_{p_{2}})$.

 The Vlasov equation \eqref{vlasov} represents the kinetic movement of particles, whereby particles move at velocities $\hat{p} = p/p_{0}$ for relativistic velocity $p\in\mathbb{R}^{2}$. The acceleration is due to the two-dimensional electromagnetic force
 \begin{align}\label{Kforce}
 K(t,x,p) \eqdef (E_1 + \hat{p}_2 B, E_2 -\hat{p}_1 B).
 \end{align}
 The electromagnetic forces in turn are given by \eqref{maxwell} and \eqref{constraints}. This system is inherently a coupled relativistic kinetic and wave equation system describing the evolution of a collisionless plasma.

 The global well-posedness of the rVM system in two dimensions has been resolved \cite{GS1998}. In the first part of \cite{GS1998}, global well-posedness of a $C^{1}$ solution is established with the criteria that the momentum support \eqref{suppf} of the density function remains bounded for all finite times. In this paper, we will denote the momentum support of the density function by
 \begin{equation}\label{suppf}
 \Omega(t) \eqdef \{p \in \mathbb{R}^{2} \ |  \ f(s,x,p) \neq 0 \text{ for some } 0\leq s \leq t, x\in\mathbb{R}^{2}\}.
 \end{equation}
 In the second part of \cite{GS1998}, it is proven that the momentum support indeed does not become unbounded in finite time. To do so, the momentum is bounded by the acceleration due to the electromagnetic forces along characteristic curves \eqref{momentumaccelerationbound}. In turn, the electromagnetic forces are bounded by the size of the momentum support. We state the estimate from \cite{GS1998} on the electromagnetic force here:
 \begin{align}\label{GSfieldestimate}
\sup_{x\in\mathbb{R}^{2}}|E(t,x)| + \sup_{x\in\mathbb{R}^{2}}|B(t,x)| \leq \mathcal{C}(t)\tilde{P}(t)\log(\tilde{P}(t)),
 \end{align}
where $\mathcal{C}(t)$ is a continuous function in time and the function $\tilde{P}(t)$ denotes the size of the momentum support $\Omega(t)$:
\begin{align}\label{originalP}
\tilde{P}(t) = \sup_{p\in\Omega(s)} |p|.
\end{align}
Note that \eqref{suppf} implies that $\tilde{P}(t)$ in non-decreasing. By a Gronwall inequality type argument, the momentum support is then shown to be bounded by an exponentially growing function in time and global well-posedness of the two-dimensional rVM system is established. It should also be noted that the two and one-half dimensional rVM system, in which the momentum domain is three-dimensional but the spatial domain is two-dimensional, is also globally well-posed \cite{GS1997}, \cite{LukStrain2016}.

On the other hand, in the three dimensional problem, global well-posedness remains an open problem. Again, if the momentum support remains bounded for all time, the rVM system is globally well-posed: see \cite{GlasseyStrauss1986} and later alternative proofs \cite{BouchutGolsePallard2003}, \cite{KlainermanStaffilani2002}. Following this result, several works have proven criteria for global well-posedness: \cite{GlasseyStrauss1987}, \cite{GlasseyStrauss1989}, \cite{Kunze2015}, \cite{LukStrain2014}, \cite{Pallard2015}, \cite{Patel2018},  \cite{AlfonsoIllner2010}. Even if the momentum support is unbounded, given certain moments are bounded for all time, global well-posedness is known \cite{LukStrain2016}. Global existence is proved in the cylindrically-symmetric problem in \cite{WangCylindrical}. For studies of global solutions in cases of spherical symmetry and near-spherical symmetry, we refer the reader to \cite{Horst1990} and \cite{Rein1990}.
 
 Let's return back to the rVM system in lower dimensions. In the $1\frac{1}{2}$-dimensional rVM system, the spatial variable is one-dimensional and the momentum variable is two-dimensional. For that setting, \cite{GlasseyPankavichSchaeffer2010} establishes bounds on large time behavior of the momentum. They prove that in the non-spatial direction, the momentum is bounded proportionally to $1 + \sqrt{t-|x|+C_0}$. In this paper, we investigate large-time quantitative estimates in the full two-dimensional rVM setting. Specifically, we study the aspect ratio of the momentum support, i.e. its relative size in each direction: \textit{Given a bound of the momentum support in one direction, what is a bound on the momentum support in the orthogonal direction?}

 \subsubsection*{Outline of the paper}

 In the rest of this section, we set up the structure and notation of the paper and state the main result. In Section 2, estimates on the electromagnetic force are computed. Finally, in Section 3, these bounds are used in conjunction with the structure set up in Section 1 to prove the main result.

 \subsection{Setup and Main Result}

We will consider initial particle density $f(0,x,p)$ to be compactly supported in both $x$ and $p$ variables. Then, in the $e_2$ direction, we define
\begin{align}\label{P2define}
P_{2}(t) =\sup_{p\in\Omega(s)} |p_2|.
\end{align}
Hence, $P_2(t)$ is a nondecreasing function by \eqref{suppf}. Next, let $0 \leq w < 1$ be fixed. Then, the momentum support $\Omega(t)$ is contained in
\begin{align}\label{rectangle}
\Omega(t) \subset [-P(t),P(t)]\times[-P_{2}(t),P_{2}(t)]
\end{align}
where $P(t)$ is also a nondecreasing function, given by
\begin{align}\label{P}
P(t) = \tilde{P}(t) + P_{2}(t)^{\frac{1}{w}}.
\end{align}
In \eqref{P}, $\tilde{P}(t)$ is given by \eqref{originalP} and $w$ is determined later in the paper. We consider the relative growth of the upper bound on the momentum support $P(t)$ depending on the growth only in one direction $P_2(t)$. Note that 
\begin{align}\label{ratiolimit}
0 \leq \frac{P_{2}(t)}{P(t)^{w+\delta}} \rightarrow 0
\end{align}
as $t\rightarrow \infty$ assuming that $P_{2}(t) \rightarrow \infty$ as $t\rightarrow \infty$.

The support of the density function can be determined by the method of characteristics. The density $f(t,x,p)$ is constant along characteristic curves $X(s) = X(s;t,x,p)$ and $V(s) = V(s;t,x,p)$ given by the transport (Vlasov) equation \eqref{vlasov}:
\begin{align*}
\dot{X}(s) &= \hat{V}(s),\\
\dot{V}(s) &=  K(s,X(s), V(s))
\end{align*}
where $K(s,X(s),V(s))$ is given by \eqref{Kforce} and the initial data is
\begin{align*}
X(t;t,x,p) = x \quad \text{and} \quad V(t;t,x,p) = p,
\end{align*}
where 
\begin{align*}
    \hat{V} \eqdef \frac{V}{\sqrt{1+|V|^2}}.
\end{align*}
 One immediate consequence is that the density function is uniformly bounded
 \begin{equation}\label{linfinity}
 \|f\|_{L^{\infty}_{x,p}}(t) = \|f_0\|_{L^{\infty}_{x,p}}.
 \end{equation} 
Furthermore, we can compute the magnitude of the momentum characteristic curve as follows:
\begin{align*}
\frac{1}{2}\frac{d}{dt}|V(t)|^{2} &= V(t)\cdot \dot{V}(t)\\
&= V(t) \cdot K(t,X(t),V(t))\\
&\leq |V(t)||K(t,X(t), V(t))|,
\end{align*}
which gives
\begin{align*}
|V(t; t,x,p)| - |V(0;t,x,p)|\leq \int_{0}^{t}|K(s,X(s;t,x,p), V(s;t,x,p))|ds.
\end{align*}
Thus,
\begin{align}\label{momentumaccelerationbound}
|p| \leq |V(0;t,x,p)| + \int_{0}^{t} |K(s,X(s;t,x,p),V(s;t,x,p))| ds.
\end{align}
Hence,
\begin{align*}
\sup_{0\leq s \leq t} \sup_{p\in \Omega(s)} |p| \leq \sup_{0\leq s \leq t}\sup_{p\in \Omega(s)}  |V(0;t,x,p)| +  \sup_{0\leq s \leq t}\sup_{p\in \Omega(s)} \int_{0}^{s} |K(s',X(s';t,x,p),V(s;t,x,p))| ds'.
\end{align*}
Then, using the definition given in \eqref{P}, we obtain that
\begin{align}\label{Ptbound}
P(t) \leq P_{2}(t)^{\frac{1}{w}} + P(0) +  \int_{0}^{t} \sup_{x\in\mathbb{R}^{3}}\sup_{p\in\Omega(s)}|K(s,x,p)| ds.
\end{align}
From \eqref{Ptbound}, the bound on the quantity $P(t)$ requires control on the electromagnetic field $K$. We decompose $K(t,x)$ following \cite{GS1998}. The electromagnetic field $K$ can be decomposed into three components and bounded as follows: Let us denote $|K(t,x,p)| \leq |K(t,x)| \eqdef |E| + |B|$ for convenience as $\hat{p} \leq 1$ in \eqref{Kforce}. Then, $$|K(t,x)| \leq  c_{0} (1+ K_{T}(t,x)+ K_{S,1}(t,x)+K_{S,2}(t,x)),$$ in which $c_{0}$ is a fixed constant dependent on the initial value $K(0,x,p)$ and
 \begin{equation}\label{KT}
 K_{T}(t,x) = \int_{0}^{t}\int_{|y-x|\leq t-s}\int_{\mathbb{R}^{2}}\frac{f(s,y,p) \ dp \ dy \ ds}{p_{0}^{2}(1+\hat{p}\cdot\xi)^{\frac{3}{2}}(t-s)\sqrt{(t-s)^2-|y-x|^2}},
 \end{equation}

\begin{equation}\label{KS1}
 K_{S,1}(t,x) = \int_{0}^{t}\int_{|y-x|\leq t-s}\int_{\mathbb{R}^{2}}\frac{|K(s,y)|f(s,y,p) \ dp \ dy \ ds}{p_{0}\sqrt{(t-s)^2-|y-x|^2}},
 \end{equation}
and
\begin{equation}\label{KS2}
 K_{S,2}(t,x) = \int_{0}^{t}\int_{|y-x|\leq t-s}\int_{\mathbb{R}^{2}}\frac{K_{g}(s,y,\omega)f(s,y,p) \ dp \ dy \ ds}{p_{0}(1+\hat{p}\cdot\xi)\sqrt{(t-s)^2-|y-x|^2}},
 \end{equation}
 where $$\xi = (y-x)/(t-s)$$ and $$K_{g}^{2} = |E\cdot\omega|^{2}+|B+\omega_1 E_2 - \omega_2 E_1|^{2}$$ for $$\omega=(y-x)/|y-x|.$$ 
Importantly, the following conservation law holds for the $K_g$ quantity in \eqref{KS2}:
\begin{equation}\label{eq:consKg}
    \frac{1}{4} \int_{C_{t,x}} K_g^2 d \sigma +4 \pi \int_{C_{t,x}} \int_{\mathbb{R}^2} p_0 (1+\hat{p} \cdot \omega) f \ dp \ d \sigma \leq C,
\end{equation}
where $C_{t,x}$ represents the backward null cone emanating from $(t,x)$, which is defined as 
\begin{equation}
    C_{t,x} = \{ (s,y) \in \mathbb{R} \times \mathbb{R}^2 \, | \, 0 \leq s \leq t, \, t-s = |y-x| \}.
\end{equation}
The density function and electromagnetic fields also satisfy the following conservation laws:
\begin{align}\label{totalmass}
    \int_{\mathbb{R}^{2}}\int_{\mathbb{R}^{2}} f(t,x,p) dp dx = \text{constant}
\end{align}
and
\begin{align}\label{L2conservation}
\frac{1}{2}\int_{\mathbb{R}^{2}} |E(t,x)|^{2} + |B(t,x)|^{2} dx + 4\pi \int_{\mathbb{R}^{2}}\int_{\mathbb{R}^{2}} p_0 f(t,x,p) dp dx = \text{constant}.
\end{align}
The first conservation law \eqref{totalmass} means that the total mass of the plasma is constant in time. For example, these conservation laws are written for the two,  three, and $2\frac{1}{2}$-dimensional rVM systems in \cite{LukStrain2016}. 
The remainder of this paper is dedicated to bounding the components \eqref{KT}, \eqref{KS1} and \eqref{KS2} and thereby proving our main result:
\begin{theorem}
Consider initial data $f_0 \in C^{1}$ and $E_0, B_0 \in C^{2}$ such that $f_0$ is compactly supported in $x$ and $p$ and $E_0, B_0 \in L^{2}(\mathbb{R}^{2})\cap W^{2,\infty}(\mathbb{R}^{2})$. Then there exists a $T \geq 0$ such that for any $t > T$, the momentum support $\Omega(t)$ of the density function $f(t,x,p)$ is contained in the region
\begin{align*}
    [- c t^{8} P_{2}(t)^{3} \log(t P_{2}(t)))^{3}, c t^{8} P_{2}(t)^{3} \log(t P_{2}(t)))^{3}] \times [-P_{2}(t),P_{2}(t)]
\end{align*}
where $c$ is a fixed constant.
\end{theorem}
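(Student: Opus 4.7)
The starting point is the inequality \eqref{Ptbound}, which reduces the problem to controlling $\int_0^t \sup_{x,\,p\in\Omega(s)}|K(s,x,p)|\,ds$. Through the decomposition $|K| \leq c_0(1 + K_T + K_{S,1} + K_{S,2})$ from \eqref{KT}--\eqref{KS2}, this in turn reduces to estimating each piece on the backward null cone $|y-x|\leq t-s$. The overall plan follows the architecture of \cite{GS1998} leading to \eqref{GSfieldestimate}, but with the goal of replacing the $\tilde{P}(t)\log\tilde{P}(t)$ growth by one in which the power of $P(t)$ is strictly less than $1$, so that \eqref{Ptbound} can be closed into an algebraic (rather than exponential-in-time) bound on $P(t)$ in terms of $P_2(t)$.

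The new ingredient is the rectangular confinement \eqref{rectangle}: whenever a $p$-integral appears in $K_T,\,K_{S,1},\,K_{S,2}$, the integration is over a thin rectangle of width only $2P_2(s)$ in the $e_2$ direction, and the definition \eqref{P} guarantees the aspect ratio $P_2(s)/P(s)^w \leq 1$ with $w$ small. I would therefore derive a family of anisotropic momentum estimates, most importantly bounds of the form
\begin{equation*}
\int_{\Omega(s)} \frac{dp}{p_0^{\,k}(1+\hat p\cdot \xi)^{\alpha}} \lesssim P_2(s)^{a}\,P(s)^{b}\,(\log P(s))^{c},
\end{equation*}
with exponents $b < 1$ and $a > 0$ tuned to the singularity at $1+\hat p\cdot\xi\to 0$. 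The geometric input is that the region where $\hat p \cdot \xi$ is close to $-1$ corresponds to an angular window about the antipodal direction of $\xi$, and the rectangular support intersects this window in a set whose measure is linear in $P_2(s)$ rather than in $P(s)$.

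Using these building blocks, the three pieces are then handled as follows. For $K_{S,1}$ I pull out $|K(s,y)|$ and apply the anisotropic bound together with $\|f\|_{L^\infty} = \|f_0\|_{L^\infty}$, producing an integral inequality that closes by Gronwall and is responsible for the polynomial-in-$t$ factor in the final bound. For $K_{S,2}$ I apply Cauchy--Schwarz on the null cone $C_{t,x}$, absorb the $K_g$-factor using the conservation law \eqref{eq:consKg}, and bound the remaining $p$-integral anisotropically; the mass and energy conservation laws \eqref{totalmass}--\eqref{L2conservation} control what remains. For $K_T$, which carries the worst singularity $(1+\hat p\cdot \xi)^{-3/2}$, I split $\Omega(s)$ into angular sectors measured against $\xi$: on the sector away from $\hat p\cdot\xi=-1$ the integrand is bounded by brute force, while on the near-singular sector the thin dimension $P_2(s)$ of the support provides the decisive gain, at the cost of logarithmic factors.

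The central obstacle is this last estimate on $K_T$: one must track the interplay between the $(1+\hat p \cdot \xi)^{-3/2}$ singularity, the restriction of $\hat p$ to the anisotropic rectangle, and the $\xi$-dependence in a way that yields a clean pointwise bound on $K_T(t,x)$ in terms of $P(t)$, $P_2(t)$, $t$, and logarithms. Once the three bounds are in hand, substituting them into \eqref{Ptbound} gives an inequality of the form $P(t)\lesssim P_2(t)^{1/w} + P(0) + \int_0^t \bigl(\text{poly}(s,P_2(s))\,P(s)^{b}\bigr)\,ds$ with $b<1$, from which a Gronwall/bootstrap argument on the interval $[T,\infty)$ and the choice $w<\tfrac{1}{3}$ (so that $P_2^{1/w}$ is compatible with a $P_2^3$ right-hand side via \eqref{ratiolimit}) produce the stated bound $ct^8 P_2(t)^3 \log(tP_2(t))^3$. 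The threshold time $T$ is the point past which the initial-data contributions $P(0)$ and $\|E_0\|_{L^2\cap W^{2,\infty}} + \|B_0\|_{L^2\cap W^{2,\infty}}$ are dominated by the main term.
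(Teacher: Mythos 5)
Your high-level strategy---anisotropic momentum bounds exploiting the thin rectangle, sublinear-in-$P(t)$ estimates on the fields, and algebraic closure of \eqref{Ptbound}---matches the spirit of the paper. However, two of your term-by-term plans would not go through, and the difficulty is in the wrong place.

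First, your treatment of $K_{S,1}$ by ``pull out $|K(s,y)|$ \ldots closes by Gronwall'' cannot give the stated polynomial bound. If you bound $K_{S,1}(t,x)$ by a quantity involving $\int_0^t \|K(s,\cdot)\|_{L^\infty}\,ds$ with a factor linear in $\|K\|_{L^\infty}$, then Gronwall gives exponential growth of $\|K\|_{L^\infty}$ in time, which is exactly the bound of \cite{GS1998} that the theorem is trying to improve; you would lose the $t^8 P_2^3 \log^3$ form. The paper instead splits the null cone into a bulk region, a thin shell near the cone boundary, and a tip region; on the bulk it applies Cauchy--Schwarz in $y$ against the conserved $L^2$ norm of $K$ from \eqref{L2conservation}, and only on the thin pieces does it use $\|K(s,\cdot)\|_{L^{q'}}$, which by interpolation is controlled by $\|K\|_{L^2}^{1-\mu}\|K\|_{L^\infty}^{\mu}$ with $\mu<1$. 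The resulting $\|K\|_{L^\infty}^\mu$ (with $\mu=\frac12$ chosen at the end) is then absorbed by Young's product inequality, not by Gronwall. This sublinear power in $\|K\|_{L^\infty}$ is the mechanism that keeps the estimate algebraic.

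Second, you describe $K_T$---with its $(1+\hat p\cdot\xi)^{-3/2}$ singularity---as the central obstacle, to be handled by splitting $\Omega(s)$ into angular sectors relative to $\xi$. In the paper $K_T$ is actually the easy term: the $\xi$-integral over $|\xi|\leq1$ is computed exactly (after rotating so $\hat p$ points along $\xi_1$) and equals $\lesssim p_0$, cancelling one power of $p_0$ in the denominator; what remains is $\int_{\Omega(s)} p_0^{-1}\,dp$ over the rectangle, which gives $P_2(t)\log P(t)$ directly with no angular decomposition. The genuine difficulty is $K_{S,2}$: there the paper introduces the angle $\varphi=\angle(-\xi,e_1)$ and $\theta=\angle(p,e_1)$ and performs a double angular decomposition (in $\varphi$ relative to the threshold $\alpha$ and in $\theta$ or $\theta-\varphi$ relative to $\beta$ or $\gamma$), further splits the $\psi$-integral near $\psi=0$, uses the null-cone energy identity \eqref{eq:consKg}, and then jointly optimizes over $\alpha,\beta,\gamma$ subject to $4B\leq 2A\leq C$. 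The output is $K_{S,2}\lesssim t P_2(t)^{3/4}P(t)^{3/4}\log(P(t))^{3/4}$; the exponent $3/4<1$ on $P(t)$ is what lets Young's inequality (with $p=4/3$, $q=4$, $w=\frac13$ exactly, not $w<\frac13$) close \eqref{Ptbound} into $P(t)\lesssim t^8 P_2(t)^3\log^3$. Your outline for $K_{S,2}$ says ``apply Cauchy--Schwarz and bound the $p$-integral anisotropically'' but does not engage with this layered decomposition and optimization, which is where almost all of the work lies. Finally, the closing step is Young's inequality twice, not a Gronwall/bootstrap; Gronwall would again reintroduce exponential factors.
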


\begin{remark}
Throughout the proof, we use that $t \lesssim P(t)$, i.e., the growth of $P(t)$ is larger than linear. Note that otherwise, the theorem would trivially hold because $P_{2}(t) \geq P_{2}(0) > 0$ by \eqref{P2define} and \eqref{totalmass}.

In this remark and in the proof of the theorem, we use the notation $Y \lesssim Z$ to denote that $Y \leq c Z$ for some time independent constant $c > 0$.
\end{remark}

\begin{remark}
The regularity of the initial data $f_{0} \in C^{1}$ is in the variables $x$ and $p$ and $E_{0}, B_{0} \in C^{2}$ is in the variable $x$. This regularity ensures a global $C^{1}$ solution $f(t,x,p)$ in the variables $t,x,p$, as shown in \cite{GS1998}.
\end{remark}

\section{Bounds on the Electromagnetic Forces}

In this section, we find bounds on each component \eqref{KT}, \eqref{KS1}, \eqref{KS2} of the electromagnetic force. For the estimates on $K_T$ and $K_{S,1}$, we pick a fixed $T > 1$ large enough so that $P(T) \geq 2$.

\begin{proposition}
The following estimate holds for all $x\in\mathbb{R}^{2}$ and $t\geq T$:
\begin{equation}\label{KTfinalbound}
 	K_T(t,x)\lesssim t P_{2}(t)\log(P(t)).
 \end{equation}
 \end{proposition}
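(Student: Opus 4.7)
The plan is to bound $K_T$ by reducing it to a pure momentum integral over $\Omega(s)$, exploiting cancellation between the singular kernel $(1+\hat{p}\cdot\xi)^{-3/2}$ and the $1/p_0^2$ prefactor after integrating out the spatial variables on the backward null cone. Using $f\leq \|f_0\|_{L^\infty}$ and passing to polar coordinates $y-x = \rho\omega$, $\omega\in S^1$, $\rho\in[0,t-s]$, one gets $\xi=\mu\omega$ with $\mu=\rho/(t-s)\in[0,1]$ and $\hat{p}\cdot\xi = \mu|\hat{p}|\cos(\phi_\omega-\phi_p)$. The $\omega$--integral and then the $\rho$--integral are done first; the outcome is a $p_0$ factor which cancels one power in $p_0^2$, leaving $1/p_0$, an integrable weight over the rectangle.

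First, for fixed $p$, the angular integral can be estimated by
\[
\int_{S^1}\frac{d\omega}{(1+\hat{p}\cdot\xi)^{3/2}} \;=\;\int_0^{2\pi}\frac{d\phi}{(1+\nu\cos\phi)^{3/2}}\;\lesssim\;\frac{1}{1-\nu}, \qquad \nu=\mu|\hat{p}|\in[0,1),
\]
which follows by concentrating the integral near $\phi=\pi$, expanding $1+\nu\cos\phi\approx(1-\nu)+\nu\phi'^{\,2}/2$ with $\phi'=\phi-\pi$, and rescaling (the width of the singular spike is $\sim\sqrt{1-\nu}$ and its height is $\sim(1-\nu)^{-3/2}$). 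Note that $|\hat{p}|<1$ ensures $\nu<1$ strictly, so the kernel is integrable.

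Second, after the substitution $\mu=\rho/(t-s)$, the $\rho$--integral becomes
\[
\int_0^{t-s}\frac{\rho\, d\rho}{(t-s)\sqrt{(t-s)^2-\rho^2}\,(1-\mu|\hat{p}|)}
\;=\;\int_0^1\frac{\mu\, d\mu}{(1-\mu|\hat{p}|)\sqrt{1-\mu^2}}\;\lesssim\;\frac{1}{\sqrt{1-|\hat{p}|^2}}\;=\;p_0,
\]
the explicit bound coming from a Weierstrass substitution $t=\tan(\alpha/2)$ after $\mu=\sin\alpha$. Combining the two steps,
\[
K_T(t,x)\;\lesssim\;\int_0^t\!\int_{\Omega(s)}\frac{p_0\, dp\, ds}{p_0^2}\;=\;\int_0^t\!\int_{\Omega(s)}\frac{dp\, ds}{p_0}.
\]
Using $\Omega(s)\subset[-P(t),P(t)]\times[-P_2(t),P_2(t)]$, I evaluate the $p_1$--slice first:
\[
\int_{-P}^{P}\frac{dp_1}{\sqrt{1+p_1^2+p_2^2}}=2\sinh^{-1}\!\left(\frac{P}{\sqrt{1+p_2^2}}\right)\lesssim \log(P(t)),
\]
since $P(t)\geq P(T)\geq 2$. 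Integrating the resulting bound over $p_2\in[-P_2(t),P_2(t)]$ yields
\[
\int_{\Omega(s)}\frac{dp}{p_0}\;\lesssim\;P_2(t)\log(P(t)),
\]
and a final $ds$--integration from $0$ to $t$ produces the factor $t$, giving the claimed bound $K_T(t,x)\lesssim t\,P_2(t)\log(P(t))$.

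The main obstacle is correctly identifying the cancellation between the singular kernel and the $1/p_0^2$ weight: a naive pointwise estimate of the momentum integral over $\xi$ produces a bound like $P^2$ in the worst direction $\xi\approx\pm e_1$, which is far too large. The key idea is that one must do the $\omega$--average on the null cone before attempting the $p$--integral; this averaging smooths the singularity, converts the $(1+\hat{p}\cdot\xi)^{-3/2}$ factor into $(1-|\hat{p}|^2)^{-1/2}=p_0$, and reduces the analysis to the harmless integral $\int dp/p_0$ on the rectangle, where the rectangle's aspect ratio gives the improved factor $P_2(t)\log(P(t))$ instead of $P(t)$.
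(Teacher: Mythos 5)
Your argument is correct and follows the same structural route as the paper: reduce by the change of variables $\xi = (y-x)/(t-s)$ to the crucial bound $\int_{|\xi|\leq 1}(1+\hat{p}\cdot\xi)^{-3/2}(1-|\xi|^2)^{-1/2}\,d\xi\lesssim p_0$, then evaluate $\int_{\Omega(s)}dp/p_0 \lesssim P_2(t)\log(P(t))$ over the bounding rectangle and integrate in $s$. The only cosmetic difference is in how the $\xi$-integral is bounded — you use polar coordinates, the angular spike estimate $\int_0^{2\pi}(1+\nu\cos\phi)^{-3/2}\,d\phi\lesssim (1-\nu)^{-1}$, and then the radial integral, whereas the paper rotates $\hat{p}$ onto $e_1$, integrates out $\xi_2$ exactly (yielding a factor $\pi$), and evaluates the remaining one-dimensional $\xi_1$-integral in closed form.
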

 \begin{proof}
 Performing the change of variables $\xi = (y-x)/(t-s)$ in \eqref{KT}, using \eqref{suppf} and \eqref{linfinity}, and then changing the order of integration, we obtain
\begin{align}
 K_{T}(t,x) & = \int_{0}^{t}\int_{|\xi|\leq 1}\int_{\mathbb{R}^{2}}\frac{f(s,x+\xi(t-s),p) \ dp \ d\xi \ ds}{p_{0}^{2}(1+\hat{p}\cdot\xi)^{\frac{3}{2}}\sqrt{1-|\xi|^2}}\nonumber\\
 &\lesssim \int_{0}^{t}\int_{\Omega(s)}\int_{|\xi|\leq 1}\frac{ d\xi \ dp \ ds}{p_{0}^{2}(1+\hat{p}\cdot\xi)^{\frac{3}{2}}\sqrt{1-|\xi|^2}}.\label{KT_bound1}
\end{align}
Now we note that for a given $\hat{p}$, we may rotate the integration variable $\xi$ so that $\hat{p}$ points in the $\xi_{1}$ direction. That gives rise to the following identity:
 \begin{align*}
 	\int_{|\xi|\leq 1}\frac{d\xi}{(1+\hat p\cdot\xi)^{3/2}\sqrt{1-|\xi|^2}}
 	&=\int_{|\xi|\leq 1}\frac{d\xi}{(1+|\hat p|\xi_1)^{3/2}\sqrt{1-|\xi|^2}}\\
 &=  \int^1_{-1}\frac{1}{(1+|\hat p|\xi_1)^{3/2}}\int^{\sqrt{1-\xi^2_1}}_{-\sqrt{1-\xi^2_1}}\frac{1}{\sqrt{1-\xi^2_1-\xi^2_2}}d\xi_2d\xi_1\\
 	&=\pi\int^1_{-1}\frac{1}{(1+|\hat p|\xi_1)^{3/2}}d\xi_1\\
  &= 2\pi \frac{1}{|\hat p|}\left[\frac{1}{\sqrt{1-|\hat p|}}-\frac{1}{\sqrt{1+|\hat p|}}\right].
 \end{align*}
In the above calculation of the $\xi_2$ integral, we used the following calculation that holds for any $a \in (0,1)$:
\begin{align*}
\int_{-a}^{a} \frac{1}{\sqrt{a^{2}-x^{2}}} dx &= \int_{-1}^{1} \frac{1}{\sqrt{a^{2}-a^{2}y^{2}}} a dy = \int_{-1}^{1} \frac{1}{\sqrt{1-y^{2}}}  dy = \arcsin(1) - \arcsin(-1) = \pi.
\end{align*}
Furthermore, we have that
\begin{align*}
\frac{1}{\sqrt{1-|\hat p|}}-\frac{1}{\sqrt{1+|\hat p|}} &= \frac{\sqrt{1+|\hat p|}-\sqrt{1-|\hat p|}}{\sqrt{1-|\hat p|^2}}\\
&= p_{0} \left(\sqrt{1+|\hat p|}-\sqrt{1-|\hat p|}\right)\\
&= 2 p_{0}\frac{|\hat p|}{\sqrt{1+|\hat p|}+\sqrt{1-|\hat p|}}\\
&\leq 2 p_{0} |\hat{p}|.
\end{align*}
Therefore,
\begin{align*}
 	\int_{|\xi|\leq 1}\frac{d\xi}{(1+\hat p\cdot\xi)^{3/2}\sqrt{1-|\xi|^2}} &\leq 4\pi p_{0}.
\end{align*}

 Recall that by \eqref{rectangle}, the momentum support $\Omega(t)$ of $f(t,x,p)$ is contained in the rectangle $[-P(t),P(t)]\times [-P_2(t), P_2(t)]$. Thus, for $0\leq s \leq t$,
 \begin{align}
\int_{\Omega(s)}\frac{1}{p_0}dp &\lesssim \int^{P_2(s)}_{-P_2(s)}\int^{P(s)}_{-P(s)}\frac{1}{(1+p_{1}^{2}+p_{2}^{2})^{\frac{1}{2}}}dp_1 dp_2 \nonumber\\
&\lesssim \int^{P_2(s)}_{0}\int^{P(s)}_{0}\frac{1}{1+p_{1}}dp_1 dp_2  \nonumber\\
&= \int^{P_2(s)}_{0} \log(1+P(s))dp_2  \nonumber\\
&\lesssim P_{2}(s)\log(1+P(s)) \nonumber\\
& \lesssim P_{2}(t)\log(P(t))\label{usefulestimatektks1}
 \end{align}
 where we used that $P(t)$ and $P_2(t)$ are non-decreasing and $P(t) \geq P(T) \geq 2$.
 Hence, \eqref{KT_bound1} becomes
 \begin{align*}
 K_T(t,x) \lesssim  t P_{2}(t)\log(P(t)).
 \end{align*}
 
 \end{proof}

\subsection{Estimates on $K_{S,1}$}

\begin{proposition}
    The following estimate holds for all $x\in \mathbb{R}^2$ and $t \geq T$:
    \begin{align}\label{KS1finalbound}
K_{S,1}(t,x) &\lesssim t \sqrt{\log(t)} P_{2}(t)\log(P(t)) + t P_{2}(t)\log(P(t))\|K\|_{L^{\infty}([0,t]\times\mathbb{R}^{2})}^{\mu} 
\end{align}
for any choice $0 < \mu \leq 1$. The implicit constant depends on $\mu$.
\end{proposition}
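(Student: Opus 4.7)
The plan is to introduce the shorthand $h(s,y) = \int_{\mathbb{R}^2} f(s,y,p) p_0^{-1}\,dp$, so that by the computation leading to \eqref{usefulestimatektks1} one has $\|h(s,\cdot)\|_{L^\infty} \lesssim P_2(t)\log P(t)$, and to combine this with the $L^2$ energy control $\|K(s,\cdot)\|_{L^2(\mathbb{R}^2)} \lesssim 1$ furnished by \eqref{L2conservation}. The main obstacle is that the kernel $1/\sqrt{(t-s)^2-|y-x|^2}$ is non-integrable at the cone boundary $|y-x|=t-s$, while $|K(s,y)|$ has no useful pointwise control other than $\|K\|_{L^\infty}$. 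To handle this, I will decompose the backward disk at a scale $\delta \sim 1/t$: apply Cauchy--Schwarz using the $L^2$ energy bound on the bulk, and Hölder with an $L^2$--$L^\infty$ interpolation of $K$ on the thin shell near the boundary.

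Concretely, fix $s$ and $\delta \in (0,t-s]$, and split the disk into $A_b = \{|y-x|\leq t-s-\delta\}$ and $A_c = \{\max(0,t-s-\delta)<|y-x|\leq t-s\}$. On $A_b$, Cauchy--Schwarz in $y$ pairing $|K(s,y)|$ against $h(s,y)/\sqrt{(t-s)^2-|y-x|^2}$ gives a factor $\|K(s,\cdot)\|_{L^2}\lesssim 1$ times $\|h\|_{L^\infty}$ times
$$ \left( \int_{A_b} \frac{dy}{(t-s)^2-|y-x|^2} \right)^{1/2} = \left(\pi \log \frac{(t-s)^2}{\delta(2(t-s)-\delta)}\right)^{1/2} \lesssim \sqrt{\log(t/\delta)}, $$
by a polar coordinates substitution. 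Integrating over $s \in [0,t]$ and choosing $\delta \sim 1/t$ produces the first term $t\sqrt{\log t}\,P_2(t)\log P(t)$.

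On $A_c$, apply Hölder with dual exponents $p = 2/(1-\mu)$ and $p'=2/(1+\mu)$. The interpolation
$$ \|K(s,\cdot)\|_{L^p} \leq \|K(s,\cdot)\|_{L^2}^{1-\mu}\|K\|_{L^\infty([0,t]\times \mathbb{R}^2)}^{\mu} \lesssim \|K\|_{L^\infty}^{\mu} $$
controls the $K$ factor, with the endpoint $\mu=1$ reducing to the direct bound $|K|\le \|K\|_{L^\infty}$. Since $p'/2 = 1/(1+\mu)<1$, the singular weight is integrable on $A_c$, and polar coordinates give $\int_{A_c}((t-s)^2 - |y-x|^2)^{-p'/2}\, dy \lesssim (\delta(t-s))^{1-p'/2}$. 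Raising to the $1/p'$ power yields $(\delta(t-s))^{\mu/2}$. Multiplying by $\|h\|_{L^\infty} \lesssim P_2(t)\log P(t)$ and integrating over $s\in [0,t]$ gives $\delta^{\mu/2}\int_0^t (t-s)^{\mu/2}\,ds \sim t$ upon choosing $\delta = 1/t$, producing the second term $t\,P_2(t)\log P(t)\,\|K\|_{L^\infty}^\mu$. The cutoff $\delta = 1/t$ is the precise value that extracts $\sqrt{\log t}$ from the bulk while keeping the shell contribution linear in $t$; the hard part of the argument is getting these two scalings to match simultaneously.
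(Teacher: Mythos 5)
Your proof is correct and uses essentially the same strategy as the paper: Cauchy--Schwarz against the $L^2$ energy conservation bound on the bulk of the backward light cone, and H\"older with the $L^2$--$L^\infty$ interpolation of $K$ on a thin shell near the cone boundary, with $\delta\sim 1/t$ chosen to balance the two contributions. The only structural difference is that you work with a two-piece decomposition $A_b, A_c$ for each fixed $s$, whereas the paper splits off a third ``tip'' region $t-\delta\le s\le t$; merging the tip into the shell is a mild simplification that works because your H\"older estimate $(\delta(t-s))^{\mu/2}$ remains valid (indeed improves) when $t-s<\delta$.
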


Before the proof of this proposition, we would like to note that this bound on $K_{S,1}$ is not sharp. However, because the bound on $K_{S,2}$ done later in the paper is greater in large time, we write the bound on $K_{S,1}$ in this way for simplicity.

\begin{proof}
The term $K_{S,1}$ is given by \eqref{KS1}:
\begin{align*}
    K_{S,1}(t,x) = \int_{0}^{t}\int_{|y-x|\leq t-s}\int_{\mathbb{R}^{2}}\frac{|K(s,y)|f(s,y,p) \ dp \ dy \ ds}{p_{0}\sqrt{(t-s)^2-|y-x|^2}}.
\end{align*}
We will subdivide the region of integration into three parts:
\begin{align*}
   & 0\leq s \leq t-\delta \ \text{     and  }  \ |y-x|\leq t-s-\varepsilon,\\
  &  0 \leq s \leq t-\delta \  \text{  and  }    \ t-s-\varepsilon\leq |y-x|\leq t-s,\\
  &  t-\delta\leq s \leq t. &
\end{align*}
Note that we need $\delta \geq  \varepsilon$ to make sense of the first two cases.

These parts represent different regions of the solid lightcone of height $t$ and base radius $t$. In particular, the third region represents the corner of the cone, while the first and second region are far from the corner and they represent the interior of the cone and the layer closer to the boundary, respectively.

In the first case, we use the conservation law \eqref{L2conservation} and estimate \eqref{usefulestimatektks1} to obtain
\begin{align*}
K_{S',1}(t,x) &\eqdef \int_{0}^{t-\delta}\int_{|y-x|\leq t-s-\varepsilon}\int_{\mathbb{R}^{2}}\frac{|K(s,y)|f(s,y,p) \ dp \ dy \ ds}{p_{0}\sqrt{(t-s)^2-|y-x|^2}}\\
&\leq \int_{0}^{t-\delta}\|K(s,\cdot)\|_{L^{2}(\mathbb{R}^{2})} \Big(\int_{|y-x|\leq t-s-\varepsilon}\Big(\int_{\mathbb{R}^{2}}\frac{f(s,y,p)}{p_{0}} \ dp\Big)^{2}\frac{1}{{(t-s)^2-|y-x|^2}} \ dy \Big)^{\frac{1}{2}} \ ds\\
&\lesssim \int_{0}^{t-\delta}\Big(\int_{|y-x|\leq t-s-\varepsilon}\Big(P_{2}(t)\log(P(t))\Big)^{2}\frac{1}{{(t-s)^2-|y-x|^2}} \ dy \Big)^{\frac{1}{2}} \ ds\\
&= P_{2}(t)\log(P(t)) \int_{0}^{t-\delta}\Big(\int_{0}^{t-s-\varepsilon}\int_{0}^{2\pi}\frac{1}{{(t-s)^2-r^2}} \ r \ d\theta \ dr\Big)^{\frac{1}{2}} \ ds\\
&\lesssim   P_{2}(t)\log(P(t))\int_{0}^{t-\delta}\Big(\int_{2\varepsilon(t-s)-\varepsilon^2}^{(t-s)^{2}}\frac{1}{u} \ du\Big)^{\frac{1}{2}} \ ds\\
&=  P_{2}(t)\log(P(t)) \int_{0}^{t-\delta}\sqrt{2\log(t-s)-\log(\varepsilon)-\log(2(t-s)-\varepsilon)}\ ds\\
&\lesssim t P_{2}(t)\log(P(t))\sqrt{|\log(\varepsilon)|},
\end{align*}
where we make the choice that $\varepsilon\leq \delta = t^{-1} < 1$ for $t> 1$ and hence, we have $|\log(t-s)|\leq |\log(\delta)|\leq |\log(\varepsilon)|$ for $0 \leq s \leq t-\delta$. The other term only matters in the estimate if $\log(2(t-s)-\varepsilon) < 1$ in which case, since $2(t-s)-\varepsilon \geq 2\delta - \varepsilon \geq \varepsilon$, we again get $|\log(2(t-s)-\varepsilon)| \leq |\log(\varepsilon)|$.

Next, we use again the conservation law \eqref{L2conservation} and estimate \eqref{usefulestimatektks1}, together with Hölder inequality for $1 \leq q<2$ and $q'>2$ with $\frac{1}{q}+\frac{1}{q'} = 1$ to find 
\begin{align*}
 K_{S'',1}(t,x) &\eqdef \int_{0}^{t-\delta}\int_{t-s-\varepsilon \leq |y-x|\leq t-s}\int_{\mathbb{R}^{2}}\frac{|K(s,y)|f(s,y,p) \ dp \ dy \ ds}{p_{0}\sqrt{(t-s)^2-|y-x|^2}}\\
 &\leq \int_{0}^{t-\delta}\|K(s,\cdot)\|_{L^{q'}(\mathbb{R}^{2})} \Big(\int_{t-s-\varepsilon\leq |y-x|\leq t-s}\Big(\int_{\mathbb{R}^{2}}\frac{f(s,y,p)}{p_{0}} \ dp\Big)^{q}\frac{1}{{((t-s)^2-|y-x|)^{\frac{q}{2}}}} \ dy \Big)^{\frac{1}{q}} \ ds\\
 &\lesssim  P_{2}(t)\log(P(t)) \int_{0}^{t-\delta}\|K(s,\cdot)\|_{L^{\infty}(\mathbb{R}^{2})}^{\mu}\Big(\int_{t-s-\varepsilon}^{t-s}\frac{1}{{((t-s)^2-r^2)^{\frac{q}{2}}}} r \ dr \Big)^{\frac{1}{q}} \ ds\\
  &\lesssim P_{2}(t)\log(P(t)) \int_{0}^{t-\delta}\|K(s,\cdot)\|_{L^{\infty}(\mathbb{R}^{2})}^{\mu} (t-s)^{\frac{1}{q}-\frac{1}{2}} \varepsilon^{\frac{1}{q}-\frac{1}{2}}\ ds,
\end{align*}
where we used interpolation 
\begin{align*}
  \|K(s,\cdot)\|_{L^{q'}(\mathbb{R}^{2})} \lesssim \|K(s,\cdot)\|_{L^{2}(\mathbb{R}^{2})}^{1-\mu} \|K(s,\cdot)\|_{L^{\infty}(\mathbb{R}^{2})}^{\mu}  
\end{align*}
for $\mu = \frac{2}{q}-1$. This implies that $\mu$ takes values in the range $0 < \mu \leq 1$. Thus,
\begin{align*}
K_{S'',1}(t,x) &\lesssim t^{\frac{1}{q}+\frac{1}{2}} P_{2}(t)\log(P(t))\varepsilon^{\frac{1}{q}-\frac{1}{2}}\|K\|_{L^{\infty}([0,t]\times\mathbb{R}^{2})}^{\mu},
\end{align*}
where the implicit constant in the inequality depends on $\mu$.
Finally, we can apply the same argument as for $K_{S'',1}$ to get
\begin{align*}
 K_{S''',1}(t,x) &\eqdef \int_{t-\delta}^{t}\int_{0 \leq |y-x|\leq t-s}\int_{\mathbb{R}^{2}}\frac{|K(s,y)|f(s,y,p) \ dp \ dy \ ds}{p_{0}\sqrt{(t-s)^2-|y-x|^2}}\\
 &\lesssim P_{2}(t)\log(P(t))\int_{t-\delta}^{t}\|K(s,\cdot)\|_{L^{\infty}(\mathbb{R}^{2})}^{\mu}\Big(\int_{0}^{t-s}\frac{1}{{((t-s)^2-r^2)^{\frac{q}{2}}}} r \ dr \Big)^{\frac{1}{q}} \ ds\\
 &\lesssim P_{2}(t)\log(P(t))\int_{t-\delta}^{t}\|K(s,\cdot)\|_{L^{\infty}(\mathbb{R}^{2})}^{\mu} (t-s)^{\frac{2}{q}-1} \ ds\\
 &\lesssim P_{2}(t)\log(P(t))\|K\|_{L^{\infty}([0,t]\times\mathbb{R}^{2})}^{\mu} \delta^{\frac{2}{q}}.
\end{align*}
Picking $\varepsilon = \delta = t^{-1}$, we obtain
\begin{align*}
K_{S,1} &\lesssim t \sqrt{\log(t)} P_{2}(t)\log(P(t)) + t P_{2}(t)\log(P(t))\|K\|_{L^{\infty}([0,t]\times\mathbb{R}^{2})}^{\mu} 
\end{align*}
for any value $0 < \mu \leq 1$, where again, the implicit constant depends on $\mu$.
\end{proof}

\subsection{Estimates on $K_{S,2}$}

\begin{proposition}
    The following estimate holds for all $x\in \mathbb{R}^2$ and all $t > T$ for $T$ sufficiently large:
    \begin{align}\label{KS2finalbound}
K_{S,2}(t,x) &\lesssim   t P_2(t)^{\frac{3}{4}} P(t)^{\frac{3}{4}}  \log(P(t))^{\frac{3}{4}}.
\end{align}
\end{proposition}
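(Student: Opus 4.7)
The plan is to mimic the structure of the $K_{S,1}$ estimate, but use the null-cone conservation law \eqref{eq:consKg} for $K_g$ rather than the $L^2$ bound \eqref{L2conservation} for $K$. First, I would parametrize the solid backward lightcone of $(t,x)$ via a foliation into null cones from shifted apex points $(t_0, x)$ with $t_0 \in [0,t]$: writing $y = x + (t_0-s)\omega$ with $s \in [0, t_0]$ and $\omega \in S^1$, one obtains $dy\,ds = (t_0-s)\,dt_0\,d\theta\,ds$, together with $(t-s)^2 - |y-x|^2 = (t-t_0)(t+t_0-2s)$ and $1 - |\xi| = (t-t_0)/(t-s)$. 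The measure on each null cone becomes $d\sigma = \sqrt{2}\,(t_0-s)\,ds\,d\theta$, so \eqref{eq:consKg} reads $\int_{C_{t_0,x}} K_g^2\,d\sigma \lesssim 1$ uniformly in $t_0$, and integrating over $t_0 \in [0,t]$ also yields the bulk bound $\int_0^t \int_{|y-x|\leq t-s} K_g^2\,dy\,ds \lesssim t$.

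Next, for each $t_0$ I would apply Cauchy--Schwarz in $(s,\theta)$ to separate $K_g$ from the remainder $\Phi(s,y,\xi) := \int f/(p_0(1+\hat p\cdot\xi))\,dp$, using the conservation law to bound the $K_g^2$ factor. For $\Phi$ I would develop two complementary estimates. The sharp but singular one is $\Phi \lesssim P_2 \log P \cdot (t-s)/(t-t_0)$, obtained from $(1+\hat p\cdot\xi) \geq 1-|\xi|$ together with the estimate $\int_\Omega p_0^{-1}\,dp \lesssim P_2 \log P$ used earlier for $K_T$. The less singular one is $\Phi \lesssim (PP_2\log P)^{1/2} ((t-s)/(t-t_0))^{3/4}$, obtained from a secondary Cauchy--Schwarz in $p$ between $\int f\,dp \lesssim P P_2$ and $\int f/(p_0 + p\cdot\xi)^2\,dp \lesssim \log P \cdot ((t-s)/(t-t_0))^{3/2}$, the latter coming from a peak analysis of the integrand near $\hat p = -\omega$.

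Third, the $t_0$-integration has a nonintegrable singularity at the apex $t_0 = t$. I would split it at a threshold $t_0 = t - \eta$, using the first $\Phi$ bound in the far region $[0, t-\eta]$ and the second $\Phi$ bound in the near region $[t-\eta, t]$. In the near region I would additionally use that $\int K_g^2\,dy\,ds$ over the shell of width $\eta$ adjacent to the apex null cone is $\lesssim \eta$, which follows by integrating the null-cone conservation over $t_0 \in [t-\eta, t]$. The two contributions are then balanced by optimizing over $\eta$, and the overall factor of $t$ comes from the outer $dt_0$ integration.

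The principal obstacle is taming the singularity at the apex of the backward lightcone. Neither bound on $\Phi$ alone closes the estimate: the singular bound produces a divergent $t_0$-integral, while the less singular bound produces a weaker $P$-dependence than the desired $P^{3/4}$. The exponents $3/4$ in $P_2^{3/4}$, $P^{3/4}$, and $(\log P)^{3/4}$ should arise precisely from optimizing the interpolation between the two regimes, which I expect to be the technical heart of the argument.
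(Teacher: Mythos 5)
Your skeleton — change of variables to the null-cone foliation parametrized by the apex height $t_0$ (equivalently, the paper's $\psi=(t-t_0)/2$), Cauchy--Schwarz per cone against the conservation law \eqref{eq:consKg}, and a near/far split at the apex balanced over a threshold $\eta$ — is exactly the paper's framework. But the two bounds you propose for $\Phi=\sigma_S$ do not close the argument, and the crucial ingredient that produces the $3/4$ exponents is missing.

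\textbf{The near-region bound is still too singular.} After the per-shell Cauchy--Schwarz, one has to estimate
\[
\int_0^{\eta}\frac{1}{\sqrt{\psi}}\left(\int_0^{t-2\psi}\int_{S^1}\frac{\Phi^2}{r+\psi}\,r\,d\theta\,dr\right)^{1/2}d\psi .
\]
With $1-|\xi|=2\psi/(r+2\psi)$, your second bound reads $\Phi \lesssim (PP_2\log P)^{1/2}\left((r+2\psi)/\psi\right)^{3/4}$, hence $\Phi\sim\psi^{-3/4}$, and the integrand scales like $\psi^{-5/4}$, which diverges at $\psi=0$. Any bound of the form $\Phi\lesssim\psi^{-\sigma}$ with $\sigma\ge 1/2$ is inadmissible near the apex; the only admissible input there is the uniform bound $\sigma_S\lesssim P(t)^2$ (Lemma~3 of \cite{GS1998}), which is what the paper uses in each of its $K_{S,2}^{\cdot,\cdot,1}$ pieces. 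Your secondary Cauchy--Schwarz in $p$ does not help here.

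\textbf{Without the angular decomposition the exponents cannot be reached.} Suppose you patch the near region with $\sigma_S\lesssim P^2$ and keep your $\Phi\lesssim P_2\log P/(1-|\xi|)$ in the far region. Optimizing over $\eta$ yields $\eta\sim tP_2\log P/P^2$ and $K_{S,2}\lesssim t\,P\,(P_2\log P)^{1/2}$; inserting your intermediate bound as a third regime improves this only to roughly $t\,P\,(P_2\log P)^{1/3}$. Both exceed the target $t\,P^{3/4}P_2^{3/4}(\log P)^{3/4}$ by a factor of order $(P/P_2)^{1/4}$ or more, which is large in the relevant regime $P\gtrsim P_2^3$. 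The reason is that all your $\Phi$ bounds are uniform in the direction $\varphi=\angle(-\xi,e_1)$ and thus forced to use the worst case $\varphi\approx 0$. The paper's essential observation is geometric: if $\theta=\angle(p,e_1)$ satisfies $|\theta|>\gamma$, then the rectangle $[-P,P]\times[-P_2,P_2]$ forces $|p|=p_2/\sin\theta\lesssim P_2/\gamma$, which is far smaller than $P$; and the set of $\varphi$ for which the singular angle $\theta\approx\varphi$ is near $0$ has small measure. Accordingly the paper slices $\varphi$ into $|\varphi|\le\alpha$ versus $|\varphi|>\alpha$, then slices $\theta$ into $|\theta-\varphi|\lessgtr\beta$ and $|\theta|\lessgtr\gamma$, applies the near/far $\psi$-split in each of the four pieces, and finally optimizes over the thresholds $A=P_2/\alpha$, $B=P_2/\beta$, $C=P_2/\gamma$; the competing terms $tP_2P^{3/2}C^{-1}$ and $tP_2^{1/2}(\log P)^{3/2}C$ in \eqref{totalboundKS2} balance at $C=P_2^{1/4}P^{3/4}(\log P)^{-3/4}$, which is where the $3/4$'s come from. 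Your interpolation in $\psi$ alone, without this angular decomposition, cannot produce them.
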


The rest of this section is devoted to prove the previous proposition. The term $K_{S,2}$ needs further decomposition. As in \cite{GS1998}, define
\begin{align}\label{sigmaS}
    \sigma_S(t,y,\xi)=\int_{\mathbb{R}^{2}} \frac{f(t,y,p)}{p_0(1+\hat p\cdot \xi)} \ dp,
\end{align}
where $t\geq 0$ and $y,\xi \in \mathbb{R}^{2}$ with the condition $|\xi|\leq 1$. Then we can write \eqref{KS2} as
\begin{align*}
K_{S,2}(t,x) &= \int_{0}^{t}\int_{|y-x|\leq t-s}\frac{\sigma_{S}(s,y,\xi)K_{g}(s,y,\omega)}{\sqrt{(t-s)^{2}-|y-x|^{2}}} \ dy \ ds 
\end{align*}
where in the above expression, we make use of the convenient shorthand notation $\xi = (y-x)/(t-s)$. The integral in $(s,y)$ is over a (solid) lightcone with height $t$ and base radius $t$. We write $|y-x| = r$ and use the following change of variables (the same as done in \cite{GS1998}):
\begin{align*}
    \psi = \frac{1}{2}(t-s-r),
\end{align*}
which leads to
\begin{align*}
s = t-r-2\psi, \quad  \xi =(r+2\psi)^{-1}(y-x).
\end{align*}
With this choice of variables, we can view the $K_{S,2}$ integral as integrals over conic shells of height $t-2\psi$ in the time variable and base radius $t-2\psi$ in the space variable:
\begin{align}\label{KS2sigmaS}
K_{S,2}(t,x) &= \int_{0}^{t/2}\int_{0}^{t-2\psi}\int_{|y-x|= r}\frac{\sigma_{S}(t-r-2\psi,y,(r+2\psi)^{-1}(y-x))}{\sqrt{\psi}\sqrt{r+\psi}} K_{g}(t-r-2\psi,y,\omega)\ dS_y \ dr\ d\psi.
\end{align}
In particular, writing $K_{S,2}$ in this way will enable us to use the conservation law \eqref{eq:consKg} for $K_g$ over a conic shell.

To estimate $K_{S,2}$, we need to understand the singularity coming from $\sigma_{S}$. For that purpose, using $\angle (v,w)$ to be the counterclockwise measured angle going from $w$ to $v$ with values in $(-\pi,\pi]$, we define
\begin{equation}\label{angles}
\theta = \angle (p,e_1), \quad \vp \eqdef \vp(\xi) = \angle (-\xi,e_1).
\end{equation} 
Note that $\angle(-\xi, p) = \vp-\theta$. This gives
\begin{align*}
    \frac{1}{1+\hat{p}\cdot \xi} &=  \frac{1}{1-\hat{p}\cdot (-\xi)}\\
    &= \frac{1}{1-|\hat{p}||\xi|\cos\angle(-\xi,p)}\\
    &= \frac{1}{1-|\hat{p}||\xi|\cos(\varphi-\theta)}.
\end{align*}
For any $0\leq s \leq t$, the support of $f(s,y,p)$ is contained in $[-P(t),P(t)]\times [-P_{2}(t),P_{2}(t)]$. Thus, we have the bound 
\begin{align}\label{sigmaSbound}
    \sigma_S(s,y,\xi) &\lesssim
    \int_{[-P(t),P(t)]\times [-P_{2}(t),P_{2}(t)]}\frac{1}{p_{0}\left(1-|\hat{p}||\xi|\cos(\varphi-\theta)\right)}\ dp
\end{align}
for any $0\leq s \leq t$.

To control $K_{S,2}$, we must control the denominator in the integral \eqref{sigmaSbound}. It is sufficient to consider $\vp \in [-\pi/2, \pi/2]$ because the complementary region in $\vp$ will yield the same bounds. Hence, we first break down 
\begin{align*}
K_{S,2} &= \tilde{K}_{S,2} + \hat{K}_{S,2}
\end{align*}
where $\tilde{K}_{S,2}$ is the restriction to $\vp\in [-\pi/2,\pi/2]$ and $\hat{K}_{S,2}$ is the complementary integral.

The denominator in \eqref{sigmaSbound} is singular when $|\hat{p}||\xi|\cos(\varphi-\theta) \approx 1$. First, we consider the magnitude of $\hat{p}$. For $p = (p_1,p_2)$, the magnitude of $\hat{p}$ is closest to $1$ near the $e_1$ direction since the domain of integration in \eqref{sigmaSbound} is of the larger size $P(t)$ in that direction. In other words, the denominator is more singular when $|\theta| << 1$ and is not as singular in the complement. Additionally, the denominator is singular when $\cos(\varphi-\theta) \approx 1$, or $|\varphi-\theta| << 1$. Both conditions $|\theta| << 1$  and $|\varphi-\theta| << 1$ are realized in the most singular region where  $|\varphi-\theta| << 1$ and $|\varphi| << 1$.

We define the time dependent quantities:
$$ \alpha \eqdef \frac{P_2(t)}{A(t)}, \quad \beta \eqdef  \frac{P_2(t)}{B(t)}, \quad \gamma \eqdef \frac{P_2(t)}{C(t)}, $$
where for some fixed $\delta > 0$, we impose the condition that $P(t)^{w+\delta} < A(t), B(t), C(t) < P(t) $. The precise expressions for $A(t), B(t), C(t)$ are determined later in the proof. Note that we choose this lower bound on $A(t),B(t)$ and $C(t)$ to ensure that $\alpha,\beta,\gamma \rightarrow 0$ as $t\rightarrow \infty$ since
\begin{align*}
    0 \leq \frac{P_{2}(t)}{P(t)^{w+\delta}} \leq \frac{1}{P(t)^{\delta}}.
\end{align*}
Moreover, this inequality implies that there is some time $T \geq 0$ such that for all $t > T$, we have that $\alpha, \beta, \gamma < \frac{\pi}{4}$. Using these angle definitions, for the term $\tilde{K}_{S,2}$, we decompose the integral in $dS_y$ into the regions $|\varphi| \leq \alpha$ and $|\varphi| > \alpha$ to get $$\tilde{K}_{S,2} =  K_{S,2}^{1} + K_{S,2}^{2}$$
where
\begin{align*}
    K_{S,2}^{1}(t,x)
    &=  \int_{0}^{t/2}\int_{0}^{t-2\psi}\int_{|\varphi| \leq \alpha}\frac{\sigma_{S}(t-r-2\psi,y,(r+2\psi)^{-1}(y-x))}{\sqrt{\psi}\sqrt{r+\psi}} K_{g}(t-r-2\psi,y,\omega)\ dS_y \ dr\ d\psi
\end{align*}
and
\begin{align*}
    K_{S,2}^{2}(t,x)
    &=  \int_{0}^{t/2}\int_{0}^{t-2\psi}\int_{\alpha < |\varphi| \leq \frac{\pi}{2}}\frac{\sigma_{S}(t-r-2\psi,y,(r+2\psi)^{-1}(y-x))}{\sqrt{\psi}\sqrt{r+\psi}} K_{g}(t-r-2\psi,y,\omega)\ dS_y \ dr\ d\psi.
\end{align*}
Subsequently, in $K_{S,2}^{1}(t,x)$, we decompose $\sigma_{S} = \sigma_{S}^{1,1} + \sigma_{S}^{1,2}$ with
\begin{align*}
\sigma_{S}^{1,1}(t,y,\xi) &= \int_{|\theta-\varphi|\leq \beta} \frac{f(t,y,p)}{p_0(1+\hat p\cdot \xi)} \ dp
\end{align*}
and
\begin{align*}
\sigma_{S}^{1,2}(t,y,\xi) &= \int_{|\theta-\varphi| > \beta} \frac{f(t,y,p)}{p_0(1+\hat p\cdot \xi)} \ dp.
\end{align*}
This decomposition gives the respective decomposition $K_{S,2}^{1} = K_{S,2}^{1,1}+K_{S,2}^{1,2}$.

In $K_{S,2}^{2}$, we decompose $\sigma_{S} =  \sigma_{S}^{2,1} + \sigma_{S}^{2,2}$ as
\begin{align*}
\sigma_{S}^{2,1}(t,y,\xi) &= \int_{|\theta|\leq \gamma} \frac{f(t,y,p)}{p_0(1+\hat p\cdot \xi)} \ dp
\end{align*}
and
\begin{align*}
\sigma_{S}^{2,2}(t,y,\xi) &= \int_{|\theta| > \gamma} \frac{f(t,y,p)}{p_0(1+\hat p\cdot \xi)} \ dp,
\end{align*}
which gives the respective decomposition $K_{S,2}^{2} = K_{S,2}^{2,1}+K_{S,2}^{2,2}$.

For each term in the decomposition $K_{S,2}^{i,j}$ with $i,j \in \{1,2\}$, we apply Cauchy-Schwarz integral inequality in the combined $dS_y dr$ integral. Using the conservation law \eqref{eq:consKg} for the term $K_g$, we obtain
\begin{align*}
    K_{S,2}^{1,j}(t,x)
    &\lesssim  \int_{0}^{t/2}\left(\int_{0}^{t-2\psi}\int_{|\varphi| \leq \alpha}\frac{\left(\sigma_{S}^{1,j}(t-r-2\psi,y,(r+2\psi)^{-1}(y-x))\right)^{2}}{\psi(r+\psi)}\ dS_y \ dr\right)^{\frac{1}{2}} d\psi
\end{align*}
and
\begin{align*}
    K_{S,2}^{2,j}(t,x)
    &\lesssim  \int_{0}^{t/2}\left(\int_{0}^{t-2\psi}\int_{\alpha < |\varphi| \leq \frac{\pi}{2}}\frac{\left(\sigma_{S}^{2,j}(t-r-2\psi,y,(r+2\psi)^{-1}(y-x))\right)^{2}}{\psi(r+\psi)}\ dS_y \ dr\right)^{\frac{1}{2}} d\psi.
\end{align*}
We now consider each integral. Using polar coordinates $p = (u\cos\theta, u\sin\theta)$ for $u\in [0,\infty)$ and recalling that
\begin{align*}
 |\xi| &=(r+2\psi)^{-1}|y-x|\\
 &= \frac{r}{r+2\psi},
\end{align*}
we can write the integrand in \eqref{sigmaSbound} as
\begin{align}\label{sigmaSintegrand}
\frac{1}{p_{0}\left(1-|\hat{p}||\xi|\cos(\varphi-\theta)\right)} &= \frac{1}{\sqrt{1+u^2}(1-\frac{u}{\sqrt{1+u^2}} \f{r}{r+2\psi} \cos(\vp-\theta) )}.
\end{align}
This integrand is the same that appears in each $\sigma_{S}^{i,j}$.

\subsubsection{Bounds on $K_{S,2}^{1,1}$}

For $K_{S,2}^{1,1}$, we further split the $\psi$ integral into $\psi\in [0,\varepsilon_{1}]$ and $\psi\in [\varepsilon_{1}, t/2]$ to get the respective decomposition:
\begin{align*}
K_{S,2}^{1,1} &= K_{S,2}^{1,1,1} + K_{S,2}^{1,1,2}.
\end{align*}

It follows from the same argument as \eqref{sigmaSbound} and from \eqref{sigmaSintegrand} that
\begin{align*}
    \sigma_{S}^{1,1}(t-r-2\psi,y,\xi) &\lesssim \int_0^{P(t)} \int_{\vp-\beta}^{\vp+\beta}\frac{u}{\sqrt{1+u^2}(1-\frac{u}{\sqrt{1+u^2}} \f{r}{r+2\psi} \cos(\vp-\theta) )} \ du \ d\theta \\
    & \ls P(t) \int_{\vp-\beta}^{\vp+\beta}\frac{1}{1- \f{r}{r+2\psi} } \ d\theta \\
    & \ls P(t) \frac{\beta}{1- \f{r}{r+2\psi} }\\
    &= P(t) \frac{\beta (r+2\psi)}{2\psi}. 
\end{align*}

Using the above estimate, we can conclude 
\begin{align}
    K_{S,2}^{1,1,2} &\ls P(t) \beta \int_{\varepsilon_1}^{t/2} \frac{1}{\psi^{3/2}} \left( \int_{0}^{t-2\psi}\int_{-\alpha}^\alpha \frac{(r+2\psi)^2}{r+\psi} r \ d\varphi \ dr \right)^{\frac{1}{2}} d\psi \nonumber\\
    & \ls P(t) \alpha^{\frac{1}{2}}  \beta\int_{\varepsilon_1}^{t/2} \frac{1}{{\psi}^{3/2}} \left( \int_{0}^{t-2\psi} \frac{(r+2\psi)^2}{r+\psi} r  \ dr \right)^{\frac{1}{2}} d\psi \nonumber \\
    & \ls  t^{\frac{3}{2}} P(t) \alpha^{\frac{1}{2}} \beta \int_{\varepsilon_1}^{t/2} \frac{1}{{\psi}^{3/2}}  d \psi \nonumber \\
    & \ls   t^{\frac{3}{2}} P(t)  \alpha^{\frac{1}{2}}  \beta\varepsilon_1^{-\frac{1}{2}} \label{KS2112}.
\end{align}

Using the alternative bound $\sigma_{S}^{1,1} \leq \sigma_S \leq P(t)^2$ (see \cite{GS1998}, Lemma 3), we have
\begin{align}
    K_{S,2}^{1,1,1} &\ls  P(t)^{2} \int_{0}^{\varepsilon_1} \frac{1}{\psi^{{\frac{1}{2}}}} \left( \int_{0}^{t-2\psi}\int_{-\alpha}^\alpha \frac{r}{r+\psi}  \ d\varphi \ dr \right)^{\frac{1}{2}} d\psi \nonumber\\
    &\lesssim t^{\frac{1}{2}} P(t)^{2}\alpha^{\frac{1}{2}} \int_{0}^{\varepsilon_1}\frac{1}{\psi^{{\frac{1}{2}}}}  d\psi \nonumber\\
    &\lesssim  t^{\frac{1}{2}} P(t)^{2}\alpha^{\frac{1}{2}} \varepsilon_{1}^{\frac{1}{2}} \label{KS2111}.
\end{align}
We optimize the sum of the bounds \eqref{KS2112} and \eqref{KS2111} in the parameter $\varepsilon_1$:
\begin{equation*}
     t^{\frac{3}{2}} P(t)  \alpha^{\frac{1}{2}}  \beta\varepsilon_1^{-\frac{1}{2}} = t^{\frac{1}{2}} P(t)^{2}\alpha^{\frac{1}{2}} \varepsilon_{1}^{\frac{1}{2}}
\end{equation*}
which yields
\begin{equation*}
    \varepsilon_1 = t P(t)^{-1} \beta,
\end{equation*}
and hence
\begin{align}\label{KS211est}
 K_{S,2}^{1,1} &\ls t P(t)^{\frac{3}{2}}\alpha^{\frac{1}{2}}\beta^{\frac{1}{2}} = t P_{2}(t) P(t)^{\frac{3}{2}} A(t)^{-\frac{1}{2}}B(t)^{-\frac{1}{2}}.
\end{align}

\subsubsection{Bounds on $K_{S,2}^{1,2}$}
For $K_{S,2}^{1,2}$, we further split the $\psi$ integral into $\psi\in [0,\varepsilon_{2}]$ and $\psi\in [\varepsilon_{2}, t/2]$ to get the respective decomposition:
\begin{align*}
K_{S,2}^{1,2} &= K_{S,2}^{1,2,1} + K_{S,2}^{1,2,2}.
\end{align*}
In considering $K_{S,2}^{1,2}$ bounds, we will impose 
that
\begin{align}\label{rest1}
  \frac{\beta}{2} \geq \alpha, 
\end{align}
which also implies the relation $A \geq 2B$. Then,
\begin{align*}
|\theta| = |\theta-\varphi+\varphi|\geq |\theta-\varphi| - |\vp|\geq \beta - \alpha \ge \frac{\beta}{2}.
\end{align*}
First, we consider $p \in [-P(t), P(t)]\times [0, P_{2}(t)]$, so it holds that
\begin{align}\label{eq:boundp}
|p| = \frac{p_{2}}{\sin\theta} \leq  \frac{P_{2}(t)}{\sin\theta}.
\end{align}
We estimate $\sigma_S^{1,2}$ as follows. In the region where $\beta/2 \leq  \theta < \pi-\beta/2$, we have that
\begin{align}
\int\displaylimits_{\substack{\theta\in [\beta/2, \pi-\beta/2)\\
p \in [-P(t), P(t)]\times [0, P_{2}(t)]}} \frac{1}{p_{0}(1+\hat{p}\cdot\xi)} dp &= \int_{\beta/2}^{\pi-\beta/2} \int_0^{\frac{P_2(t)}{\sin\theta}}\frac{u}{\sqrt{1+u^2}(1-\frac{u}{\sqrt{1+u^2}} \f{r}{r+2\psi} \cos(\vp-\theta) )} \ du \ d\theta     \nonumber    \\
&\leq  \int_{\beta/2}^{\pi-\beta/2}  \int_0^{\frac{P_2(t)}{\sin\theta}} \frac{1}{1- \f{r}{r+2\psi} } \ du \ d\theta \nonumber \\
         &=  P_2(t) \frac{r+2\psi}{2\psi}  \int_{\beta/2}^{\pi-\beta/2} \frac{1}{\sin\theta} d \theta \nonumber \\
         &= P_2(t)  \frac{r+2\psi}{\psi}\log \left( \cot \left(\f{\beta}{4} \right) \right) \label{eq:sigma_12a}.
\end{align}
At the same time, we can use our choice of $\beta$ to say that
\begin{align*}
|p| = \frac{p_{2}}{\sin\theta} \leq \frac{P_{2}(t)}{\sin(\beta/2)} \lesssim B(t)
\end{align*}
for $\theta\in [\beta/2, \pi-\beta/2)$. Notice that to get a sharper bound than $\sigma_S^{1,2} \leq \sigma_{S} \lesssim P(t)^{2}$, it makes sense that we should pick $B(t) < P(t)$. Then, following the same argument as in Lemma 3 in \cite{GS1998}, we deduce
\begin{align}
       \int\displaylimits_{\substack{\theta\in [\beta/2, \pi-\beta/2)\\
p \in [-P(t), P(t)]\times [0, P_{2}(t)]}} \frac{1}{p_{0}(1+\hat{p}\cdot\xi)} dp 
        &\lesssim \int_{\beta/2}^{\pi-\beta/2} \int_0^{B(t)}\frac{u}{\sqrt{1+u^2}(1-\frac{u}{\sqrt{1+u^2}} \f{r}{r+2\psi} \cos(\vp-\theta) )} \ du \ d\theta \nonumber \\
        &\leq \int_0^{B(t)}\int_{0}^{\pi}\frac{u}{\sqrt{1+u^2}(1- \frac{u}{\sqrt{1+u^2}} \cos(\vp-\theta) )} \ d\theta \ du \nonumber \\
        &\lesssim \int_0^{B(t)}\frac{u}{\sqrt{1+u^2}}\frac{1}{\sqrt{1-\frac{u^{2}}{1+u^{2}}}}  \ du \nonumber\\
         &\leq \int_0^{B(t)} u  \ du \nonumber\\
        &\lesssim B(t)^{2} \label{eq:sigma_12b}.
\end{align}

In the region $\pi-\beta/2 \leq  \theta \leq  \pi$, we have that 
\begin{align*}
    |\theta -\varphi| \geq | |\theta| - |\varphi|| \geq \pi - \beta/2 -\alpha \geq \frac{\pi}{2}
\end{align*}
and 
\begin{align*}
    |\theta -\varphi| \leq |\theta| + |\varphi| \leq \pi + \alpha \leq \frac{3\pi}{2},
 \end{align*}
and hence
\begin{align*}
    1+\hat{p}\cdot \xi &= 1 - |\hat{p}||\xi|\cos(\varphi-\theta) \geq 1,
\end{align*}
so we can estimate
\begin{align}
\int\displaylimits_{\substack{\theta\in [\pi-\beta/2,\pi]\\
p \in [-P(t), P(t)]\times [0, P_{2}(t)]}} \frac{1}{p_{0}(1+\hat{p}\cdot\xi)} dp &\leq \int_{0}^{P_{2}(t)}\int_{-P(t)}^{P(t)} \frac{1}{p_{0}} \  dp_1 \ dp_2 \nonumber\\
&\lesssim P_{2}(t)\log(P(t)). \label{eq:sigma_12c}
\end{align}
 Using the bounds \eqref{eq:sigma_12a}, \eqref{eq:sigma_12b} and \eqref{eq:sigma_12c} and using the same bounds that can be obtained by symmetry for the region $\theta \in [-\pi, -\beta/2]$ i.e., when $p \in [-P(t),P(t)] \times [-P_2(t),0]$, we obtain that
 \begin{align*}
    \left(\sigma_{S}^{1,2}(t,y,\xi)\right)^{2} 
&\lesssim P_2(t) B(t)^{2} \frac{r+2\psi}{\psi} \log \left( \cot \left( \f{\beta}{4} \right) \right) + P_{2}(t)^{2}\log(P(t))^{2}
 \end{align*}
 and hence, for $t$ sufficiently large,
\begin{align}
    K_{S,2}^{1,2,2} &\lesssim  \int_{\varepsilon_2}^{t/2} \frac{1}{\sqrt{\psi}} \left( \int_{0}^{t-2\psi}\int_{|\vp|\leq\alpha}\left( P_2(t) B(t)^{2}\frac{r+2\psi}{\psi} \log \left( \cot \left( \f{\beta}{4} \right) \right) +P_{2}(t)^{2}\log(P(t))^{2}\right)\f{r}{r+\psi} \ d\varphi \ dr \right)^{\frac{1}{2}} d\psi \nonumber\\
    &\lesssim  P_2(t)^{\frac{1}{2}}B(t)\log \left( \cot \left( \f{\beta}{4} \right) \right)^{\frac{1}{2}}\int_{\varepsilon_2}^{t/2} \frac{1}{\psi} \left( \int_{0}^{t-2\psi}\int_{|\vp|\leq \alpha} (r+2\psi) \f{r}{r+\psi} \ d\varphi \ dr \right)^{\frac{1}{2}} d\psi \nonumber\\
    &\qquad + P_2(t)\log(P(t))\int_{\varepsilon_2}^{t/2} \frac{1}{\sqrt{\psi}} \left( \int_{0}^{t-2\psi}\int_{|\vp|\leq \alpha}  \f{r}{r+\psi} \ d\varphi \ dr \right)^{\frac{1}{2}} d\psi\nonumber\\
    &\lesssim t  P_2(t)^{\frac{1}{2}} \alpha^{\frac{1}{2}} B(t) \log \left( \cot \left( \f{\beta}{4} \right) \right)^{\frac{1}{2}} \left(-\log(\varepsilon_2)+\log(t)\right) + t P_{2}(t)\alpha^{\frac{1}{2}} \log(P(t)).\label{KS2122bound}
\end{align}
We have that for $t$ sufficiently large,
\begin{align*}
\log \left( \cot \left( \f{\beta}{4} \right) \right) &= \log \left( \cos \left( \f{\beta}{4} \right) \right) - \log \left( \sin \left( \f{\beta}{4} \right) \right) \\
&\lesssim \left| \log \left( \sin \left( \f{\beta}{4} \right) \right) \right|\\
&\lesssim \left| \log \left( \beta\right) \right|\\
&\leq \log(B(t)) - \log(P_{2}(t))\\
&\lesssim \log(P(t)),
\end{align*}
and also that $P_{2}(t)^{\frac{1}{2}} \leq P_{2}(t) \leq B(t)$. We will choose $\varepsilon_2 = t^{-1}  \left(B(t)^{2} + P_{2}(t)\log(P(t))\right)^{-2}$ so that $-\log(\varepsilon_{2}) \lesssim \log(P(t))$. So our bound becomes
\begin{align}\label{KS2122}
 K_{S,2}^{1,2,2} &\lesssim t P_2(t)\log (P(t))^{\frac{3}{2}} A(t)^{-\frac{1}{2}} B(t) .
\end{align}

Furthermore, using just \eqref{eq:sigma_12b} and \eqref{eq:sigma_12c}, we find that 
\begin{align}
    K_{S,2}^{1,2,1} &\lesssim  \left(B(t)^{2} + P_{2}(t)\log(P(t))\right)\int_{0}^{\varepsilon_2} \frac{1}{\sqrt{\psi}} \left(\int_{0}^{t-2\psi}\int_{|\vp|\leq \alpha} \ d\vp \ dr \right)^{\frac{1}{2}} d\psi \nonumber\\
    &\lesssim t^{\frac{1}{2}} \alpha^{\frac{1}{2}}  \left(B(t)^{2} + P_{2}(t)\log(P(t))\right)\varepsilon_2^{\frac{1}{2}}. \label{eq:KS2121boundestimate}
\end{align}

In this case, because $\varepsilon_2 = t^{-1}  \left(B(t)^{2} + P_{2}(t)\log(P(t))\right)^{-2}$, we get insignificant contribution from $K_{S,2}^{1,2,1}$ since $K_{S,2}^{1,2,1} \lesssim \alpha^{\frac{1}{2}}$. Hence, we obtain
\begin{align}\label{KS212est}
K_{S,2}^{1,2} &\lesssim t P_2(t) \log (P(t))^{\frac{3}{2}} A(t)^{-\frac{1}{2}} B(t)  .
\end{align}

\subsubsection{Bounds on $K_{S,2}^{2,1}$}
For $K_{S,2}^{2,1}$, we further split the $\psi$ integral into $\psi\in [0,\varepsilon_{3}]$ and $\psi\in [\varepsilon_{3}, t/2]$ to get the respective decomposition:
\begin{align*}
K_{S,2}^{2,1} &= K_{S,2}^{2,1,1} + K_{S,2}^{2,1,2}.
\end{align*}
We impose a restriction on our choice of $\gamma$ to satisfy 
\begin{align} \label{rest2}
 \gamma \leq \frac{\alpha}{2}
\end{align}
similarly as in \eqref{rest1}. This restriction implies that $2 A \leq C$ and will be used later in the estimate for $K_{S,2}^{2,1}$. We bound
\begin{align}
        \sigma_{S}^{2,1}(t-r-2\psi,y,\xi) &\lesssim \int_0^{P(t)} \int_{-\gamma}^{\gamma}\frac{u}{\sqrt{1+u^2}(1-\frac{u}{\sqrt{1+u^2}} \f{r}{r+2\psi} \cos(\vp-\theta) )} \ du \ d\theta \nonumber \\
        & \ls P(t) \int_{\vp-\gamma}^{\vp+\gamma}\frac{1}{1- \f{r}{r+2\psi}\cos\lambda }  \ d\lambda \nonumber\\
        & \ls P(t) \frac{1}{\sqrt{1-\frac{r^2}{(r+2\psi)^2}}} \tan^{-1} \left( \frac{ \left(1+\frac{r}{r+2\psi} \right) \tan(\lambda/2)}{ \sqrt{1-\frac{r^2}{(r+2\psi)^2}} } \right) \Bigg|_{\vp-\gamma}^{\vp+\gamma} \nonumber \\
        & = P(t) \frac{r+2\psi}{2\sqrt{\psi(r+\psi)}} \tan^{-1} \left( \frac{\sqrt{r+\psi}}{\sqrt{\psi}} \tan(\lambda/2) \right)\Bigg|_{\vp-\gamma}^{\vp+\gamma} \nonumber\\
        & \ls P(t) \gamma \frac{r+2 \psi}{\psi} \label{sigma21est} ,
\end{align}
where we have used that the slope of $\tan^{-1} x$ is bounded by $1$ and that the derivative of $\tan x$ gives
\begin{align}\label{tangentest}
\tan\left(\frac{\vp+\gamma}{2}\right)- \tan\left(\frac{\vp-\gamma}{2}\right) &\leq  \sup_{\lambda\in [\vp-\gamma,\vp+\gamma]}\frac{1}{\cos^{2}\left(\frac{\lambda}{2}\right)} \gamma \lesssim \gamma,
\end{align}
since $|\lambda/2|\leq \frac{3\pi}{8}$ due to
\begin{align*}
\left|\frac{\vp \pm \gamma}{2}\right| \leq \frac{1}{2}\left(|\vp| + |\gamma|\right) \leq \frac{1}{2}\left(\frac{\pi}{2} + \frac{\pi}{4}\right) = \frac{3\pi}{8} < \frac{\pi}{2}.
\end{align*}

Using \eqref{tangentest} gives another useful estimate on the term involving $\tan^{-1}$, shown below:
\begin{align*}
   \tan^{-1} \left( \frac{\sqrt{r+\psi}}{\sqrt{\psi}} \tan \left(\frac{\lambda}{2}\right) \right) \Bigg|_{\vp-\gamma}^{\vp+\gamma} &\leq  \frac{\sqrt{r+\psi}}{\sqrt{\psi}} \left( \tan \left(\frac{\varphi+\gamma}{2} \right) - \tan \left(\frac{\varphi-\gamma}{2} \right)\right)\sup_{\lambda\in(\varphi-\gamma,\varphi+\gamma)}\left(\frac{1}{1+\frac{r+\psi}{\psi}\tan^{2}(\lambda/2)}\right)\\
&\lesssim \gamma \frac{\sqrt{r+\psi}}{\sqrt{\psi}}\sup_{\lambda\in(\varphi-\gamma,\varphi+\gamma)}\left(\frac{1}{1+\frac{r+\psi}{\psi}\tan^{2}(\lambda/2)}\right).
\end{align*}
Combining the two estimates gives
\begin{align}\label{sigma21precise}
 \sigma_{S}^{2,1}(t-r-2\psi,y,\xi) &\lesssim P(t)\gamma \frac{r+2\psi}{\psi} \sup_{\lambda\in(\varphi-\gamma,\varphi+\gamma)}\left(\frac{1}{1+\frac{r+\psi}{\psi}\tan^{2}(\lambda/2)}\right).
\end{align}
Next, since
\begin{align*}
 \alpha \leq |\varphi| \leq \frac{\pi}{2},
\end{align*}
we then know from the assumption $\gamma \leq \frac{\alpha}{2}$ that $$|\varphi-\gamma| \geq  |\vp| - \gamma \geq \alpha - \gamma \geq \frac{\alpha}{2}, \qquad |\varphi+\gamma| \leq  |\vp| + \gamma \leq \frac{\pi}{2} + \gamma \leq \frac{\pi}{2} + \gamma.$$
Hence, we are assured that $[\vp-\gamma, \vp +\gamma]$ is either contained in $[\alpha/2,\pi/2+\gamma]$ or $[-\pi/2-\gamma, -\alpha/2]$, so the supremum in \eqref{sigma21precise} occurs at the endpoint of the interval $[\vp-\gamma, \vp +\gamma]$ closer to $0$. Thus, we can get the following estimate:
\begin{align*}
\int_{\alpha}^{\pi/2} d\varphi \sup_{\lambda\in(\varphi-\gamma,\varphi+\gamma)}\left(\frac{1}{1+\frac{r+\psi}{\psi}\tan^{2}(\lambda/2)}\right) 
&= \int_{\alpha}^{\pi/2} d\varphi \left(\frac{1}{1+\frac{r+\psi}{\psi}\tan^{2}(
\frac{\varphi-\gamma}{2})}\right)\\
&= 2\frac{\sqrt{\frac{r+\psi}{\psi}}\tan^{-1}\left(\sqrt{\frac{r+\psi}{\psi}}\tan(\varphi)\right)-\varphi}{\frac{r+\psi}{\psi}-1}\Bigg|_{\frac{\alpha-\gamma}{2}}^{\frac{\frac{\pi}{2}-\gamma}{2}}\\
&\lesssim \frac{\sqrt{\frac{r+\psi}{\psi}}}{\frac{r+\psi}{\psi}-1} + \frac{\psi}{r} \\
&= \frac{\sqrt{\psi}\sqrt{r+\psi}}{r} + \frac{\psi}{r}.
\end{align*}
Similarly,
\begin{align*}
\int_{-\pi/2}^{-\alpha} d\varphi \sup_{\lambda\in(\varphi-\gamma,\varphi+\gamma)}\left(\frac{1}{1+\frac{r+\psi}{\psi}\tan^{2}(\lambda/2)}\right) 
&= \int_{-\pi/2}^{-\alpha} d\varphi \left(\frac{1}{1+\frac{r+\psi}{\psi}\tan^{2}(
\frac{\varphi+\gamma}{2})}\right)\\
&\lesssim \frac{\sqrt{\psi}\sqrt{r+\psi}}{r} + \frac{\psi}{r}.
\end{align*}
Hence, 
\begin{align}\label{sigma21integral}
\int_{\alpha \le |\vp|\le \frac{\pi}{2}} d\varphi \sup_{\lambda\in(\varphi-\gamma,\varphi+\gamma)}\left(\frac{1}{1+\frac{r+\psi}{\psi}\tan^{2}(\lambda/2)}\right) 
&\lesssim \frac{\sqrt{\psi}\sqrt{r+\psi}}{r} + \frac{\psi}{r} \lesssim  \frac{\sqrt{\psi}\sqrt{r+\psi}}{r}.
\end{align}

From here, we can bound $K_{S,2}^{2,1,2}$. We use the bounds \eqref{sigma21est}, \eqref{sigma21precise} and \eqref{sigma21integral} to obtain
\begin{align*}
    K_{S,2}^{2,1,2} &\ls  \int_{\varepsilon_3}^{t/2}  \left( \int_{0}^{t-2\psi}\int_{\alpha < |\vp| \leq \frac{\pi}{2}} P(t)^{2}\gamma^{2} \frac{(r+2\psi)^{2}}{\psi^{3} (r+\psi)}\sup_{\lambda\in(\varphi-\gamma,\varphi+\gamma)}\left(\frac{1}{1+\frac{r+\psi}{\psi}\tan^{2}(\lambda/2)}\right) r \ d\varphi \ dr \right)^{\frac{1}{2}} d\psi \nonumber \\
&\lesssim P(t)\gamma\int_{\varepsilon_3}^{t/2} \frac{1}{\psi^{3/2}} \left( \int_{0}^{t-2\psi}\frac{r(r+2\psi)^{2}}{(r+\psi)}\int_{\alpha < |\vp| \leq \frac{\pi}{2}} \sup_{\lambda\in(\varphi-\gamma,\varphi+\gamma)}\left(\frac{1}{1+\frac{r+\psi}{\psi}\tan^{2}(\lambda/2)}\right)   \ d\varphi \ dr \right)^{\frac{1}{2}} d\psi \nonumber \\
&\lesssim P(t)\gamma \int_{\varepsilon_3}^{t/2}  \frac{1}{\psi^{3/2}}  \left( \int_{0}^{t-2\psi}\frac{r(r+2\psi)^{2}}{(r+\psi)}\frac{\sqrt{\psi}\sqrt{r+\psi}}{r} \ dr \right)^{\frac{1}{2}} d\psi ,
\end{align*}
and then we bound the resulting integral similarly as in the previous cases,
\begin{align}
    K_{S,2}^{2,1,2} 
&\lesssim P(t)\gamma \int_{\varepsilon_3}^{t/2}  \frac{1}{\psi^{3/2}}  \left( \int_{0}^{t-2\psi} \frac{(r+2\psi)^{2} \sqrt{\psi}}{\sqrt{r+\psi}}\ dr \right)^{\frac{1}{2}} d\psi \nonumber\\
&\leq P(t)\gamma \int_{\varepsilon_3}^{t/2}  \frac{1}{\psi^{5/4}}  \left( \int_{0}^{t-2\psi} \frac{(r+2\psi)^{2} }{\sqrt{\frac{r}{2}+\psi}}\ dr \right)^{\frac{1}{2}} d\psi \nonumber \\
&\lesssim P(t)\gamma \int_{\varepsilon_3}^{t/2}  \frac{1}{\psi^{5/4}}  \left( \int_{0}^{t-2\psi} (r+2\psi)^{3/2} \ dr \right)^{\frac{1}{2}} d\psi \nonumber \\
&\lesssim t^{\frac{5}{4}} P(t)\gamma \varepsilon_3^{-\frac{1}{4}}. \label{KS2212}
\end{align}

Next, we will bound $K_{S,2}^{2,1,1}$ to obtain an optimization in $\varepsilon_3$. Note that we will use the rougher estimate $\sigma_{S}^{2,1}\lesssim P(t)^{2}$ to reduce the singularity in the $\psi$ variable. Hence, we use \eqref{sigma21precise} and \eqref{sigma21integral} to obtain that
\begin{align*}
\sigma_{S}^{2,1}(t-r-2\psi, y, \xi)^{2} &\lesssim P(t)^{3}\gamma \frac{r+2\psi}{\psi} \sup_{\lambda\in(\varphi-\gamma,\varphi+\gamma)}\left(\frac{1}{1+\frac{r+\psi}{\psi}\tan^{2}(\lambda/2)}\right)
.\end{align*}
Thus,

\begin{align}
K_{S,2}^{2,1,1}  &\ls  \int_{0}^{\varepsilon_3} \frac{1}{\sqrt{\psi}} \left( \int_{0}^{t-2\psi}\int_{\alpha < |\vp|\leq \frac{\pi}{2}} P(t)^{3}\gamma \frac{r+2\psi}{\psi} \sup_{\lambda\in(\varphi-\gamma,\varphi+\gamma)}\left(\frac{1}{1+\frac{r+\psi}{\psi}\tan^{2}(\lambda/2)}\right) \frac{r}{r+\psi} \ d\varphi \ dr \right)^{\frac{1}{2}} d\psi \nonumber \\
&\ls  P(t)^{\frac{3}{2}}\gamma^{\frac{1}{2}}\int_{0}^{\varepsilon_3} \frac{1}{\psi} \left( \int_{0}^{t-2\psi}\frac{r(r+2\psi)}{r+\psi}\int_{\alpha < |\vp|\leq \frac{\pi}{2}}  \sup_{\lambda\in(\varphi-\gamma,\varphi+\gamma)}\left(\frac{1}{1+\frac{r+\psi}{\psi}\tan^{2}(\lambda/2)}\right)\ d\varphi \ dr \right)^{\frac{1}{2}} d\psi  \nonumber\\
&\ls  P(t)^{\frac{3}{2}}\gamma^{\frac{1}{2}}\int_{0}^{\varepsilon_3} \frac{1}{\psi} \left( \int_{0}^{t-2\psi}\frac{(r+2\psi)}{r+\psi} \sqrt{\psi}\sqrt{r+\psi}  \ dr \right)^{\frac{1}{2}} d\psi \nonumber\\
&\ls  P(t)^{\frac{3}{2}}\gamma^{\frac{1}{2}}\int_{0}^{\varepsilon_3} \frac{1}{\psi^{\frac{3}{4}}} \left( \int_{0}^{t-2\psi}(r+2\psi)^{\frac{1}{2}}  \ dr \right)^{\frac{1}{2}} d\psi \nonumber\\
&\ls t^{\frac{3}{4}} P(t)^{\frac{3}{2}}\gamma^{\frac{1}{2}} \varepsilon_{3}^{\frac{1}{4}}. \label{KS2211}
\end{align}
To optimize the sum of \eqref{KS2212} and \eqref{KS2211} in the parameter $\varepsilon_3$, we set
\begin{equation*}
 t^{\frac{5}{4}}  P(t)\gamma \varepsilon_{3}^{-\frac{1}{4}} =  t^{\frac{3}{4}} P(t)^{\frac{3}{2}}\gamma^{\frac{1}{2}} \varepsilon_{3}^{\frac{1}{4}}
\end{equation*}
and so
\begin{equation*}
  \varepsilon_{3}^{\frac{1}{2}} =  t^{\frac{1}{2}}P(t)^{-\frac{1}{2}}\gamma^{\frac{1}{2}}.
\end{equation*}
Therefore,
\begin{align}\label{KS221est}
K_{S,2}^{2,1} &\lesssim t P(t)^{\frac{5}{4}}\gamma^{\frac{3}{4}} \lesssim t P_{2}(t)^{\frac{3}{4}}P(t)^{\frac{5}{4}} C(t)^{-\frac{3}{4}}.
\end{align}

\subsubsection{Bounds on $K_{S,2}^{2,2}$}

In $K_{S,2}^{2,2}$, we decompose the $\psi$ integral into $[0,\varepsilon_4]$ and $[\varepsilon_4, t/2]$
\begin{align*}
  K_{S,2}^{2,2}=  K_{S,2}^{2,2,1}+  K_{S,2}^{2,2,2}.
\end{align*}

We recall that the support of $\theta$ for this term is such that 
$$ |\theta| > \gamma.$$
Therefore, similar computations as for $K_{S,2}^{1,2}$ hold. Firstly, similarly as in \eqref{eq:boundp} we have that for $\theta \in (\gamma, \pi-\gamma)$ and $p \in [-P(t), P(t)]\times [0, P_{2}(t)]$,
\begin{align*}
|p| = \frac{p_{2}}{\sin\theta} \leq  \frac{P_{2}(t)}{\sin\theta} \leq \frac{P_{2}(t)}{\sin(\gamma)} \lesssim C(t),
\end{align*}
given our choice of $\gamma $.
In the same range for $\theta$, using the same estimate as in \eqref{eq:sigma_12a}, we obtain
\begin{align*}
        \int\displaylimits_{\substack{\theta\in [\gamma, \pi-\gamma)\\
p \in [-P(t), P(t)]\times [0, P_{2}(t)]}} \frac{1}{p_{0}(1+\hat{p}\cdot\xi)} dp &
          \ls P_2(t)  \frac{r+2\psi}{\psi} \log \left( \cot \left( \f{\gamma}{2} \right) \right) 
\end{align*}
and by \eqref{eq:sigma_12b},
\begin{align*}
\int\displaylimits_{\substack{\theta\in [\gamma, \pi-\gamma)\\
p \in [-P(t), P(t)]\times [0, P_{2}(t)]}} \frac{1}{p_{0}(1+\hat{p}\cdot\xi)} dp  \lesssim C(t)^2.
\end{align*}

When $\theta \in [\pi-\gamma, \pi]$, 
$$ |\theta-\varphi| \ge \pi - \gamma - \frac{\pi}{2} \ge \frac{\pi}{2}-\gamma \ge \frac{\pi}{3}$$
$$ |\theta-\varphi| \leq \frac{3\pi}{2}. $$
Similarly as in \eqref{eq:sigma_12c},
\begin{align}
\int\displaylimits_{\substack{\theta\in [\pi-\gamma,\pi]  \\
p \in [-P(t), P(t)]\times [0, P_{2}(t)]}} \frac{1}{p_{0}(1+\hat{p}\cdot\xi)} dp & \lesssim P_2(t) \log (P(t)).
\end{align}
The same bounds can be shown for  $p \in [-P(t), P(t)]\times [-P_{2}(t),0]$ and $\theta \in (-\gamma, -\pi)$. Therefore, 
\begin{align*}
    \left(\sigma_{S}^{2,2}(t,y,\xi)\right)^{2} 
&\lesssim P_2(t) C(t)^{2} \frac{r+2\psi}{\psi} \log \left( \cot \left( \f{\gamma}{2} \right) \right) + P_{2}(t)^{2}\log(P(t))^{2}.
 \end{align*}
 
Using these bounds, we obtain that
\begin{align}
    K_{S,2}^{2,2,2} &\lesssim  \int_{\varepsilon_4}^{t/2} \frac{1}{\sqrt{\psi}} \left( \int_{0}^{t-2\psi}\int_{\alpha <|\vp|\leq \frac{\pi}{2}} \left(P_2(t) C(t)^2\frac{r+2\psi}{\psi} \log \left( \cot \left( \f{\gamma}{2} \right) \right)+P_{2}(t)^{2}\log(P(t))^{2}\right) \f{r}{r+\psi} \ d\varphi \ dr \right)^{\frac{1}{2}} d\psi \nonumber\\
    &\lesssim P_2(t)^{\frac{1}{2}}C(t)\log \left( \cot \left( \f{\gamma}{2} \right) \right)^{\frac{1}{2}}\int_{\varepsilon_4}^{t/2} \frac{1}{\psi} \left( \int_{0}^{t-2\psi}\int_{\alpha <|\vp|\leq \frac{\pi}{2}} (r+2\psi) \f{r}{r+\psi} \ d\varphi \ dr \right)^{\frac{1}{2}} d\psi \nonumber\\
    &\qquad + P_{2}(t)\log(P(t)) \int_{\varepsilon_4}^{t/2} \frac{1}{\sqrt{\psi}} \left( \int_{0}^{t-2\psi}\int_{\alpha <|\vp|\leq \frac{\pi}{2}}  \f{r}{r+\psi} \ d\varphi \ dr \right)^{\frac{1}{2}} d\psi \nonumber\\
    &\lesssim t P_2(t)^{\frac{1}{2}}  C(t)\log \left( \cot \left( \f{\gamma}{2} \right) \right)^{\frac{1}{2}} \left(-\log(\varepsilon_4) + \log(t) \right) + t P_2(t)\log(P(t))  \nonumber\\
    &\lesssim  t P_2(t)^{\frac{1}{2}}  \log (P(t))^{\frac{3}{2}}C(t),\label{KS2222bound}
\end{align}
where the last line is obtained by an argument similar to the one used to get \eqref{KS2122} using $\varepsilon_4 = t^{-1}(C(t)^2+ P_{2}(t)\log(P(t)))^{-2}$ . Additionally, as in \eqref{eq:KS2121boundestimate}, 
\begin{align*}
    K_{S,2}^{2,2,1} &\lesssim \int_{0}^{\varepsilon_4} \frac{1}{\sqrt{\psi}} \left(\int_{0}^{t-2\psi}\int_{\alpha <|\vp|\leq \frac{\pi}{2}}\frac{\sigma_{S}^{2,2}(t-r-2\psi,y,(r+2\psi)^{-1}(y-x))^{2}}{{r+\psi}} dS_y \ dr \right)^{\frac{1}{2}} d\psi\\
    &\lesssim t^{\frac{1}{2}}(C(t)^2+ P_{2}(t)\log(P(t)))\varepsilon_4^{\frac{1}{2}}.
\end{align*}

Notice that we take $\varepsilon_4 = t^{-1}(C(t)^2+ P_{2}(t)\log(P(t)))^{-2}$ to get insignificant contribution from $K_{S,2}^{2,2,1}$ as we did in the argument following \eqref{eq:KS2121boundestimate}. Hence,
\begin{align}\label{KS222est}
    K_{S,2}^{2,2} &\lesssim t P_2(t)^{\frac{1}{2}} \log(P(t))^{\frac{3}{2}} C(t) .
\end{align}

\subsubsection{Optimizing the Bounds for $K_{S,2}$}

At this point, we will use the bounds for $K_{S,2}^{i,j}$ derived in the previous sections in order to get a final estimate for $K_{S,2}$. The estimates obtained for each of the terms depend on the parameters $\alpha, \beta, \gamma$, or equivalently $A= A(t)$, $B = B(t)$ and $C = C(t)$. From \eqref{rest1} and \eqref{rest2}, we know that $4B \leq 2A \leq C$ must hold. We optimize the bound on $K_{S,2}$ with respect to this condition on the parameters.

First, we look at the sum \eqref{KS211est} and \eqref{KS212est}, which give estimates for $K_{S,2}^{1,1}$ and $K_{S,2}^{1,2}$. The sum reads:
\begin{align*}
K_{S,2}^{1,1}+K_{S,2}^{1,2}&\lesssim t P_{2}(t) P(t)^{\frac{3}{2}} A^{-\frac{1}{2}}B^{-\frac{1}{2}} +  t   P_2(t)\log (P(t))^{\frac{3}{2}}  A^{-\frac{1}{2}} B\\
&= t P_{2}(t) A^{-\frac{1}{2}}\left( P(t)^{\frac{3}{2}} B^{-\frac{1}{2}}+ \log (P(t))^{\frac{3}{2}} B\right).
\end{align*}
Thus, to minimize the bound, we choose the minimal $A$, which is $2A=C$. Thus, together with the estimates \eqref{KS221est} and \eqref{KS222est}, the sum of all four terms is
\begin{align*}
K_{S,2} &\lesssim  t P_{2}(t)^{\frac{3}{4}}P(t)^{\frac{5}{4}} C^{-\frac{3}{4}} + t  P_2(t)^{\frac{1}{2}} \log(P(t))^{\frac{3}{2}}C + t P_{2}(t) C^{-\frac{1}{2}}\left( P(t)^{\frac{3}{2}} B^{-\frac{1}{2}}+ \log (P(t))^{\frac{3}{2}} B\right)\\
&= t C^{-\frac{1}{2}}\left[\left( P_{2}(t)^{\frac{3}{4}}P(t)^{\frac{5}{4}} C^{-\frac{1}{4}} + P_2(t)^{\frac{1}{2}} \log(P(t))^{\frac{3}{2}}C^{\frac{3}{2}}\right) + P_{2}(t)\left(P(t)^{\frac{3}{2}} B^{-\frac{1}{2}}+ \log (P(t))^{\frac{3}{2}}B\right)\right]\\
&\eqdef h(B,C).
\end{align*}
We must now minimize $h(B,C)$ under the condition $0 < 4B\leq C$. Noting that $h(B,C)$ tends to $+\infty$ as $B$ or $C$ tend to $0$ or $+\infty$, any local minimum for $h(B,C)$ in the quadrant $(0,\infty)\times (0,\infty)$ is achieved at a critical point. For the $B$ coordinate of a critical point, we have
\begin{align*}
    0 &= \frac{d}{dB} \left(P(t)^{\frac{3}{2}} B^{-\frac{1}{2}}  +  \log(P(t))^{\frac{3}{2}}B\right)\\
    &= -\frac{1}{2} P(t)^{\frac{3}{2}}B^{-\frac{3}{2}} + \log(P(t))^{\frac{3}{2}}.
\end{align*}
and so,
\begin{align*}
    B &=  2^{-\frac{2}{3}}\log(P(t))^{-1} P(t).
\end{align*}
However, we only want to consider the subregion $0 < 4B\leq C$. Therefore any critical point that is in this subregion must satisfy  $4\log(P(t))^{-1} P(t)\leq C$ and
\begin{align}\label{lowerboundhBC}
h(B,C) \geq t P_2(t)^{\frac{1}{2}} \log(P(t))^{\frac{3}{2}}C \geq 4 t P_2(t)^{\frac{1}{2}} \log(P(t))^{\frac{1}{2}} P(t).
\end{align}
 We now check the value of $h(B,C)$ on the boundary $4B= C$. Setting $B= \frac{1}{4}C$ in the sum, we have
\begin{align}\label{totalboundKS2}
h(B,C) &\approx t P_{2}(t)^{\frac{3}{4}}P(t)^{\frac{5}{4}} C^{-\frac{3}{4}} +  t P_2(t)^{\frac{1}{2}}   \log(P(t))^{\frac{3}{2}} C + t P_{2}(t) P(t)^{\frac{3}{2}} C^{-1}+ t P_{2}(t) \log (P(t))^{\frac{3}{2}}C^{\frac{1}{2}} .
\end{align}
Notice that the second and third terms are larger than the first and fourth terms for sufficiently large time. On the one hand, we can compare the second and the fourth term due to the fact that, by definition, $P_2(t) \lesssim C$:
\begin{align*}
    t P_2(t)  \log(P(t))^{\frac{3}{2}}C^{\frac{1}{2}} \lesssim t P_2(t)^{\frac{1}{2}}\log(P(t))^{\frac{3}{2}}C .
\end{align*}
On the other hand, recall that in the estimate \eqref{eq:sigma_12b}, it is remarked that $B \lesssim P(t)$ for the utility of that estimate. Thus, we can compare the first and third terms by using $C \lesssim P(t)$. Then, 
\begin{align*}
    t P_2(t)^{\frac{3}{4}} P(t)^{\frac{5}{4}} C^{-\frac{3}{4}} \lesssim t P_2(t) P(t)^{\frac{3}{2}} C^{-1}.
\end{align*}
Optimizing the sum of the second and third term in \eqref{totalboundKS2}
 yields
 \begin{align*}
  C &= P_2(t)^{\frac{1}{4}} P(t)^{\frac{3}{4}}\log(P(t))^{-\frac{3}{4}}.
 \end{align*}
 Thus, by this choice of $C$,
 \begin{align}\label{twotermsKS2bound}
 t P_{2}(t) P(t)^{\frac{3}{2}} C^{-1} +  t P_2(t)^{\frac{1}{2}}   \log(P(t))^{\frac{3}{2}}C  &= 2  t P_2(t)^{\frac{3}{4}} P(t)^{\frac{3}{4}}  \log(P(t))^{\frac{3}{4}}.
 \end{align}
 Therefore,
\begin{align*}
    h(B,C) &\lesssim t P_2(t)^{\frac{3}{4}} P(t)^{\frac{3}{4}}  \log(P(t))^{\frac{3}{4}},
\end{align*}
which is smaller in large time than the lower bound found in \eqref{lowerboundhBC} because $P_{2}(t)\log(P(t)) \leq P(t)$ in large time by \eqref{ratiolimit}. This concludes the proof for minimizing $h(B,C)$ under the condition $4B \leq C$ and we can state
\begin{align}
K_{S,2} &\leq \mathcal{C} t P_2(t)^{\frac{3}{4}} P(t)^{\frac{3}{4}}  \log(P(t))^{\frac{3}{4}}
\end{align}
for some constant $\mathcal{C}$.

We can check that the choice of $C$ above yields angles
\begin{align*}
     \frac{1}{2}\alpha = \frac{1}{4}\beta = \gamma = \frac{P_{2}(t)}{C(t)} = P_{2}(t)^{\frac{3}{4}}P(t)^{-\frac{3}{4}} \log(P(t))^{\frac{3}{4}} 
 \end{align*}
 which tend to $0$ as $t\rightarrow \infty$ given that $w < 1$ in \eqref{P}.

\section{Closing the estimates}

For $t > T$ for fixed $T$ large enough, we gather the final bounds for $K_T$, $K_{S,1}$ and $K_{S,2}$ in \eqref{KTfinalbound}, \eqref{KS1finalbound} and \eqref{KS2finalbound} to bound the total forcing term $K$. We choose $\mu = \frac{1}{2}$ in \eqref{KS1finalbound} to find, for some new constant $\mathcal{C}$,
\begin{align*}
|K(t,x)| &\leq c_{0}(1+ K_{T}(t,x) + K_{S,1}(t,x) + K_{S,2}(t,x))\\
&\leq \mathcal{C}  t P_{2}(t)\log(P(t))\|K\|_{L^{\infty}([0,t]\times\mathbb{R}^{2})}^{\frac{1}{2}} + \mathcal{C} t\sqrt{\log(t)} P_{2}(t)\log(P(t))
+ K_{T}(t,x)  + K_{S,2}(t,x).
\end{align*}
Since $P(t)$ and $P_{2}(t)$ are increasing functions of time,
\begin{align*}
\|K\|_{L^{\infty}([0,t]\times\mathbb{R}^{2})} 
&\leq \mathcal{C} t P_{2}(t)\log(P(t))\|K\|_{L^{\infty}([0,t]\times\mathbb{R}^{2})}^{\frac{1}{2}} + \mathcal{C} t\sqrt{\log(t)} P_{2}(t)\log(P(t))  \\
& \quad + \|K_{T} \|_{L^{\infty}([0,t]\times\mathbb{R}^{2})} + \|K_{S,2}\|_{L^{\infty}([0,t]\times\mathbb{R}^{2})}.
\end{align*}
By Young's product inequality,
\begin{align*}
    \mathcal{C} t P_{2}(t)\log(P(t))\|K\|_{L^{\infty}([0,t]\times\mathbb{R}^{2})}^{\frac{1}{2}} &\leq \frac{1}{2}\left( \mathcal{C} t P_{2}(t)\log(P(t))\right)^{2} + \frac{1}{2}\|K\|_{L^{\infty}([0,t]\times\mathbb{R}^{2})}.
\end{align*}
Therefore,
\begin{align*}
\|K\|_{L^{\infty}([0,t]\times\mathbb{R}^{2})} 
&\leq \left( \mathcal{C} t P_{2}(t)\log(P(t))\right)^{2} + 2\|K_{T}\|_{L^{\infty}([0,t]\times\mathbb{R}^{2})} + 2\|K_{S,2}\|_{L^{\infty}([0,t]\times\mathbb{R}^{2})}.
\end{align*}

From \eqref{KTfinalbound} and \eqref{KS2finalbound}, it follows that 
\begin{align}
    \|K\|_{L^{\infty}([0,t]\times\mathbb{R}^{2})} 
&\leq  \mathcal{C}' \left( t^2 P_{2}(t)^2\log(P(t))^{2} + t P_2(t) \log(P(t))+  t P_2(t)^{\frac{3}{4}} P(t)^{\frac{3}{4}}  \log(P(t))^{\frac{3}{4}} \right)
\end{align}
for a new constant $\mathcal{C'}$. Recalling \eqref{Ptbound} we then find
\begin{align}
  P(t) &\leq P_2(t)^{\frac{1}{w}} + P(0) + \mathcal{C}' \int_0^t   \left( t^2 P_{2}(t)^2\log(P(t))^{2} + t P_2(t)^{\frac{3}{4}} P(t)^{\frac{3}{4}}  \log(P(t))^{\frac{3}{4}} \right) ds \nonumber \\
  & \leq P_2(t)^{\frac{1}{w}} + P(0) + \mathcal{C}'   \left( t^3 P_{2}(t)^2\log(P(t))^{2} + t^2 P_2(t)^{\frac{3}{4}} P(t)^{\frac{3}{4}}  \log(P(t))^{\frac{3}{4}} \right) \nonumber \\
  &\leq P_2(t)^{\frac{1}{w}} + P(0) + \mathcal{C}' \left( t^3 P_{2}(t)^2\log(P(t))^{2} + \frac{1}{p}\left( \varepsilon P(t)^{\frac{3}{4}} \right)^p + \frac{1}{q} \left( \frac{1}{\varepsilon} t^2 P_2(t)^{\frac{3}{4}}   \log(P(t))^{\frac{3}{4}} \right)^q \right),
\end{align}
where we used Young's inequality for products in the last bound. Setting $w = \frac{1}{3}$, $p = \frac{4}{3}$ and $q =4$ and due to the freedom of choice of $\varepsilon$ (to be set small enough depending on $\mathcal{C}'$ and $p$), we get
\begin{align}\label{Ptalmostfinalbound}
  P(t) &\leq  c t^8 P_2(t)^3 \log(P(t))^3.
\end{align}
Taking the logarithm of both sides and using $\log(x) \leq \frac{x}{6}$ for $x$ large enough, we have
\begin{align*}
  \log(P(t)) &\leq  \log(c) + \log( t^8 P_2(t)^3) + \log( \log(P(t))^3)\\
  &\leq \log(c) + 8\log(tP_2(t)) + \frac{1}{2} \log(P(t)).
\end{align*}
Thus, $ \log(P(t)) \lesssim \log(tP_2(t)) $ and we conclude that
\begin{align*}
   P(t) &\leq  c t^8 P_2(t)^3  \log(tP_2(t))^{3}  
\end{align*}
for large enough constant $c$.

\printbibliography

\end{document}